\newcounter{indicem}
\newcommand{\HH}{\mathcal{H}}
\newcommand{\OO}{\mathcal{O}}
\newcommand{\JJ}{\mathcal{J}}
\newcommand{\LL}{\mathcal{L}}
\newcommand{\PP}{\mathcal{P}}
\newcommand{\FF}{\mathcal{F}}
\newcommand{\QQ}{\mathcal{Q}}
\newcommand{\U}{\mathcal{U}}
\newcommand{\XX}{\mathcal{X}}
\newcommand{\N}{\mathbb{N}}
\newcommand{\E}{\mathbb{E}}
\newcommand{\R}{\mathbb{R}}
\newcommand{\F}{\mathbb{F}}
\newcommand{\Prob}{\mathbb{P}}
\newcommand{\Finn}{\begin{flushright} \rule{2mm}{2mm}\end{flushright}}
\def\OO       {{\cal O}}
\font\dsrom=dsrom10 scaled 1200
\def \1{\textrm{\dsrom{1}}}
\newcommand{\norme}[1]{\left\lVert#1\right\rVert}
\newtheorem{theorem}{Theorem}[section]
\newtheorem{defi}[theorem]{Definition}
\newtheorem{prop}[theorem]{Proposition}
\newtheorem{lemma}[theorem]{Lemma}
\newtheorem{remark}[theorem]{Remark}
\numberwithin{equation}{section}
\def\namedlabel#1#2{\begingroup
	#2%
	\def\@currentlabel{#2}%
	\phantomsection\label{#1}\endgroup
}
\begin{document}



\title{Optimality Conditions for Control Systems Governed by Monotone Stochastic Evolution Equations\thanks{Funded by Normandy Region and the European Union through ERDF research and innovation program, under the grant for "chaire d'excellence COPTI" and by Agence Nationale de la Recherche (ANR), project ANR-22-CE40-0010 COSS. This research was conducted while the third named author was affiliated with INSA Rouen Normandie.}}

\author{Ioana Ciotir\thanks{INSA Rouen Normandie, Normandie Univ, LMI UR 3226, F-76000 Rouen, France. E-mail: {\tt \{ioana.ciotir,nicolas.forcadel,hasnaa.zidani\}@insa-rouen.fr}}\and Nicolas Forcadel\footnotemark[2]\and Piero Visconti\thanks{Institut f\"{u}r Mathematik, Alpen-Adria-Universit\"{a}t Klagenfurt, Universit\"{a}tstra\ss e 65–67, 9020 Klagenfurt, Austria. E-mail: {\tt piero.visconti@aau.at}}\and Hasnaa Zidani\footnotemark[2]}

\date{\today}
\maketitle

\begin{abstract}
We study a class of optimal control problems governed by nonlinear stochastic equations of monotone type under certain coercivity and linear growth conditions. We give first order necessary conditions of optimality. A stochastic Pontryagin principle can be recovered in the case that the diffusion doesn't depend on the control. We give several applications, most notably for stochastic porous media equations in the Lipschitz case.
\end{abstract}


\noindent \textbf{AMS subject Classification}: 35R60, 60H15, 93E20, 49K30

\noindent \textbf{Keywords}: stochastic PDE, monotone operators, optimal control problem, optimality conditions, Pontryagin principle

\section{Introduction}

This paper concerns optimal control problems for systems governed by nonlinear stochastic evolution equations of monotone type. We aim to derive necessary optimality conditions in the style of the Pontryagin Maximum Principle. We introduce as a motivating example a class of problems which model diffusion in heterogeneous medium subject to stochastic effects. This example serves to illustrate the type of nonlinear monotone SPDEs we consider. Our results are formulated in a general abstract framework that encompasses a broader class of nonlinear stochastic evolution equations.

Let $\OO \subset \R^d$ be a bounded open domain with smooth boundary, and let $(\Omega, \mathcal{F}_T, \F, \Prob)$ denote a complete and separable filtered probability space where $\F = \{\mathcal{F}_t:0 \leq t \leq T\}$ represents a filtration. Assume that $W$ is a cylindrical Wiener process on $L^2(\OO)$, and $\F$ is the filtration generated by $W$, enlarged by the $\Prob$-null sets.

We consider the following state equation of stochastic porous media type:
\begin{equation}\label{StateEqIntroSPM} 
	\begin{aligned} dx_t &= \left[\Delta\Psi(x_t) + u_t^\OO(\xi)\right]dt + \sigma(x_t)dW_t, & (\xi, t) \in \OO \times (0,T), \\
		\Psi(x_t) &= u_t^\partial(S), & (S, t) \in \partial\OO \times (0,T), \\ 
		x_0 &\in H^{-1}(\OO), & \xi \in \OO, \end{aligned} 
\end{equation} 
where $\Psi : \R \to \R$ is a Lipschitz continuous, monotone function, $u^\OO$ (resp. $u^\partial$) is a progressively measurable $L^2(\OO)$ (resp. $L^2(\partial\OO)$)-valued stochastic process representing a control on the domain (resp. on the boundary). The assumptions on these functions will be made precise in Section \ref{sec:appSPM}.

The study of porous media equations has garnered significant attention in the mathematical literature due to their diverse applications in fields such as fluid dynamics, material science, and environmental modeling. The order of growth of $\Psi$ determines the diffusion behavior, ranging from slow to fast and even super-fast diffusion. Although the porous media equation is a guiding example throughout the paper, the theoretical contributions extend to general stochastic control problems governed by monotone-type nonlinear SPDEs.

In the deterministic setting, a rich body of work has been dedicated to analyzing diffusion equations in porous media, with results addressing the existence, uniqueness, and regularity of solutions, particularly for nonlinear monotone operators (see, for instance, \cite{marinoschi}). The diffusion behavior in such systems is typically determined by the growth properties of the nonlinear term defined by the function $\Psi$. Depending on these properties, the system may exhibit saturation (associated with singular behavior), slow diffusion (super-unitary growth), fast diffusion (sub-unitary growth), or even super-fast diffusion (negative order growth).


In the stochastic setting, recent research has extended these analyses to include additive or multiplicative noise terms, giving rise to an active area of investigation. Studies focus on how noise influences the properties of solutions, such as their regularity and asymptotic behavior. The monograph \cite{PM} provides a comprehensive overview of advancements in this domain. This framework encompasses the Stefan problem for phase transitions, including mushy regions corresponds to choosing a smooth function $\Psi$, which has been studied in the deterministic setting \cite{lacey1983existence, lacey1983mushy, azaiez2016two}, we mention the controllability results of \cite{barbu2021mushy} for this class of problem (in divergence form). The smoothness and coercivity assumptions of our approach exclude the stochastic Stefan problem but encompass systems with smoothed enthalpy.

The goal of our work is to derive necessary optimality conditions for the control problem associated with \eqref{StateEqIntroSPM} and a cost functional of the form:
\begin{equation}\label{costIntro} \JJ(x, u) := \E \int_0^T \int_\OO f(t,x_t, u_t^\OO) d\xi dt + \E \int_0^T \int_{\partial\OO} g(t,u_t^\partial) dS dt + \E \int_\OO h(x_T, \xi) d\xi.
\end{equation}
Such problems have practical relevance, as they model diverse phenomena, including phase transitions and diffusion processes, under uncertainty.

This paper provides an analysis of the stochastic porous media control problem by establishing first-order necessary optimality conditions for an open-loop control setting. The main novelty lies in addressing the challenges posed by the nonlinear principal part in the state equation. 
While the porous media equation serves as a concrete illustration, the formulation of the problem is abstract and accommodates a wide class of nonlinear monotone SPDEs. In this paper, we provide several applications beyond the porous media setting to demonstrate the scope of our framework.

The theory of optimal control for deterministic systems has seen remarkable growth since its inception in the mid-20th century, with significant contributions in both linear and nonlinear settings. Stochastic control theory emerged as a natural extension to account for uncertainties in the system dynamics, leveraging the mathematical foundations of stochastic processes. The control of stochastic partial differential equations (SPDEs) represents a more recent development, addressing complex systems arising in fields like physics, biology, and engineering.

In the deterministic setting, optimal control of infinite dimensional systems is by now a mature field, we refer for instance to \cite{Li_Yong_95,troltzsch2010optimal} for a review of the topic. The optimal control of deterministic porous media equations has been studied, with results on existence, uniqueness, and optimality conditions (see, e.g., \cite{marinoschi} and references therein).

For stochastic systems, more general SPDE control problems have been considered, as in \cite{bensoussan1983stochastic,hu1990maximum,zhou1993necessary,oksendal,guatteri2011stochastic,du2013maximum,fuhrman2013stochastic,lu2014general,RelaxedCtrl2013,Schrod2018,Schrod2020}, but these works cover, at most, lower order semilinearities. Recent advances include first-order optimality conditions for classes of SPDEs with nonlinear dynamics, in \cite{barbu2020optimal} the authors have established first-order conditions for closed-loop control problems including stochastic porous media equations, by a formulation using Kolmogorov equations.


To the best of our knowledge, this work is the first to provide necessary optimality conditions for an open-loop stochastic control problem with a nonlinear principal operator. While we focus on Lipschitz continuous $\Psi$, this framework paves the way for addressing more general settings, possibly including operators with polynomial growth and/or multivalued operators, since the latter cases can be studied as limits of problems with Lipschitz nonlinearities (as in \cite{PM}).
\section{Formulation of the problem. Main results}
\subsection{Notations and Preliminaries}
Let us start by establishing some notations that will be used in this paper. For any measure space $({\cal{M}}, \Sigma; \mu)$, any Banach space $(Y, \| \cdot \|_Y)$, and for $r\in[1,+\infty[$, the Bochner space $L^r({\cal{M}}; Y)$  consists of all (equivalence classes of) strongly measurable functions $y:[0,T]\longrightarrow  Y$ for which the function $t\longmapsto  \|y(t)\|^r_Y$ is $\mu$-integrable and where functions which agree a.e.  are identified.		 
This space is a Banach space when endowed with the norm  $\|\cdot\|_{L^r({\cal M};Y)}$ defined by
$$ 
\|y\|_{L^r({\mathcal{M}};Y)}:= \left(\int_{\mathcal{M}}\|y(s)\|_Y^r\,d\mu(s)\right)^{\frac1r}. $$
Let $T>0$ be a fixed final time,  we shall denote by $C([0,T];Y)$ the Banach space 
that consists of all continuous functions $y:[0,T]\longrightarrow  Y$.  This space is endowed with the usual norm 
$$\|y\|_{C([0,T];Y)}:= \max_{t\in[0,T]} \|y(t)\|_Y.$$

For any Banach spaces $Y$ and $Z$, we denote by ${\cal L}(Y,Z)$ the space of linear continuous operators from $Y$ into $Z$. When $Y = Z$, we will simply write ${\cal L}(Y)$. The dual space of $Y$, denoted $Y'$, is the space of continuous linear functionals from $Y$ to $\mathbb{R}$, i.e., ${\cal L}(Y,\R)$. The duality pairing between $Y$ and $Y'$ is written as $\langle \cdot, \cdot \rangle_{Y,Y'}$.

For any operator $A \in {\cal L}(Y,Z)$, its adjoint, denoted $A^* \in {\cal L}(Z', Y')$, is defined by the relation
\[
\langle y, A^* z' \rangle_{Y,Y'} := \langle A y, z' \rangle_{Z,Z'}\qquad 
\forall y \in Y, \ \forall z'\in Z.
\]

For a topological space $X$ and Banach spaces $Y,Z$, we say that an operator valued function $A:X\to \LL(Y,Z)$ is strongly continuous if $x\mapsto [A(x)]y$ is continuous from $X$ to $Z$ for every $y\in Y$. We say that it is uniformly continuous if it's continuous from $X$ to $\LL(Y,Z)$ with the metric induced by the operator norm. In the particular case that $Z=\R$, we have $\LL(Y,Z)=Y'$, strong and uniform continuity are equivalent to continuity into $Y'$ with the weak-$*$ and norm topologies, respectively. For a function $f:Y\to Z$, we denote its Gateaux derivative at $y$ by $\partial_yf(y)$, we say $f$ is $C^1$ if $\partial_y f$ is strongly continuous.

We also denote by ${\cal L}_2(Y, Z)$ the space of Hilbert-Schmidt operators from $Y$ to $Z$. This space is a Hilbert space itself, equipped with the inner product
\[
\langle S, Q \rangle_{{\cal L}_2(Y,Z)} = \sum_i \langle S e_i, Q e_i \rangle_{Z}
\]
where $\{e_i\} \subset Y$ is an orthonormal basis. Note that we do not systematically identify ${\cal L}_2(Y,Z)$ with its dual space ${\cal L}_2(Y, Z)'$. Instead, we identify ${\cal L}_2(Y,Z)'$ with the space ${\cal L}_2(Y,Z')$, and the duality pairing is given by
\[
\langle S, Q' \rangle_{{\cal L}_2(Y,Z), {\cal L}_2(Y,Z')} = \sum_i \langle S e_i, Q' e_i \rangle_{Z,Z'}.
\]

Throughout this work, we assume that we are working on a complete filtered probability space $(\Omega, \mathcal{F}_T, \mathbb{F}, \mathbb{P})$, where $\mathbb{F} = \{\mathcal{F}_t\}_{0 \leq t \leq T}$ is a filtration. For any $Y$-valued random variable $y : \Omega \rightarrow Y$ that is Bochner integrable, the expectation of $y$, denoted by $\mathbb{E}[y]$, is defined as
\[
\mathbb{E}[y] = \int_\Omega y(\omega) d\mathbb{P}(\omega).
\]
It is worth noting that $\mathbb{E}[y]$ is an element of $Y$ whenever $y$ is Bochner integrable. We systematically omit the dependence of functions on the random parameter $\omega$.

For a Banach space $Y$, we use the notation $L^p_{\mathbb{F}}(\Omega; C([0,T], E))$ (respectively, $L^p_{\mathbb{F}}(\Omega; L^q(0,T; E))$) to denote the space of progressively measurable functions that belong to $L^p(\Omega; C([0,T], E))$ (respectively, $L^p(\Omega; L^q(0,T; E))$). Dependence on $t$ may often be indicated with a subscript.

Finally, let $Y$ be a Banach space and let $K$ be a nonempty subset of $Y$.  For $y\in K$, we define, $T^b_K(y)$, the adjacent cone to $K$ at $y$ (see, for instance, \cite[section 4.1]{aubin2009set}) as follows
\begin{eqnarray}
	d\in T^b_K(y)  & \Longleftrightarrow&  d\in Y,  \quad\forall\{\varepsilon_k\}_{k\in\N}\subset (0,\infty):\varepsilon_k\to 0 \text{ as }k\to\infty \notag\\
	& & \exists \{d_k\}_{k\in\N}\subset X : d_k\to d \text{ as }k\to\infty\text{ and } y+\varepsilon_kd_k\in K \quad\forall k.\notag
\end{eqnarray}

For embedded spaces $V_0\subset H\subset V_1$, we say that they are a Gelfand triple if $(V_0,(\cdot,\cdot)_{V_0})$ is a separable Hilbert \if{reflexive Banach}\fi space densely embedded in the separable Hilbert space $(H,(\cdot,\cdot)_H)$ which is itself densely embedded in the normed space $V_1$, and the function mapping $h\in H$ to the functional $(\cdot,h)$ on $V_0$ extends to an isometry between $V_1$ and the dual space ${V_0}'$. In particular, this means $V_1$ is a separable Hilbert space \if{separable and reflexive\fi as well. We then define the pairing $$\langle y,x\rangle_{V_1,V_0}=\langle x,y\rangle_{V_0,V_1}=\lim_{\overset{y_n\to y}{y_n\in H}}(x,y_n)_H.$$

\subsection{Optimal control problem}

Let $T>0$ be a fixed final time. Let $V_0\subset H\subset V_1$ be a Gelfand triple on which the state will take its values. Suppose that $W$ is a cylindrical Wiener process on some separable Hilbert space $K$ and $\F$ is the filtration generated by $W$, enlarged by the $\Prob$-null sets, we denote $\LL_0:=\LL_2(K,H)$. We denote the state space
\begin{equation*}
\XX:=L_{\F}^2(\Omega;C([0,T],H))\cap L_{\F}^2(\Omega;L^2(0,T;V_0)).
\end{equation*}

Let $U$ be the control space, which is a separable Banach space. We take the set of admissible controls $U^{ad}\subset U$ to be a given closed subset. We denote \begin{equation*}
\U:=L_{\F}^2(\Omega;L^2(0,T;U)),\qquad \U^{ad}:=\{u\in \U: u_t(\omega)\in U^{ad} \text{ a.e. }\Omega\times[0,T]\}.
\end{equation*}

We consider a general abstract stochastic partial differential equation:
\begin{equation}\label{SEESPM}
	\left\{ \begin{array}{ll}
		dx_t=A(t,x_t,u_t)dt+\sigma(t,x_t,u_t)dW_t, & t\in [0,T],\\
		x_0\in H.
	\end{array}\right.
\end{equation}
We make the following assumptions: \begin{enumerate}[label={$(A_{\arabic*})$},labelwidth=1.5cm]
	\item{\label{hyp:dyn:1}} Suppose that the function $A:\Omega\times[0,T]\times V_0\times U\to V_1$ satisfies:\begin{enumerate}[label={$\arabic*.$}]
		\item For every $(x,z,u)\in V_0^2\times U$ the mapping $(\omega,t)\mapsto \langle A(t,x,u),z\rangle_{V_1,V_0}$ is progressively measurable.
		
		\item For every $(\omega,t)\in \Omega\times[0,T]$, the mapping $(x,u)\mapsto A(t,x,u)$ is $C^1$ from $V_0\times U$ to $V_1$. For every $(\omega,t,x,u)\in \Omega\times[0,T]\times V_0\times U$ we have \begin{equation}\label{AbsBoundDrift}
			\norme{\partial_x A(t,x,u)}_{\LL(V_0,V_1)}+\norme{\partial_u A(t,x,u)}_{\LL(U,V_1)}\le B.
		\end{equation} Moreover, $\norme{A(t,0,0)}_{V_1}\in L^2(\Omega\times[0,T])$.
		
		\item For every $(\omega,t,x,z,u)\in \Omega\times[0,T]\times V_0^2\times U$, we have \begin{equation}\label{AbsCoer}
			\langle \partial_xA(t,x,u)z,z\rangle_{V_1,V_0}\le -M_*\norme{z}_{V_0}^2+\alpha\norme{z}_{H}^2,\end{equation} for some $\alpha\in \R$ and $M_*>0$.
		
	\end{enumerate}
	
	\item{\label{hyp:dyn:2}} Suppose that the function $\sigma:\Omega\times[0,T]\times H\times U\to \LL_0$ satisfies:\begin{enumerate}[label={$\arabic*.$}]
		\item For every $(x,z,u,k)\in H^2\times U \times K$ the mapping $(\omega,t)\mapsto \langle \sigma(t,x,u)k,z\rangle_{H}$ is progressively measurable.
		
		\item For every $(\omega,t)\in \Omega\times[0,T]$, the mapping $(x,u)\mapsto \sigma(t,x,u)$ is $C^1$ from $H\times U$ to $\LL_0$. For every $(\omega,t,x,u)\in \Omega\times[0,T]\times V_0\times U$ we have \begin{equation}\label{AbsBoundDiff}
			\norme{\partial_x\sigma(t,x,u)}_{\LL(H,\LL_0)}+\norme{\partial_u\sigma(t,x,u)}_{\LL(U,\LL_0)}\le L.
		\end{equation} Moreover, $\norme{\sigma(t,0,0)}_{\LL_0}\in L^2(\Omega\times[0,T])$.
	\end{enumerate}
\end{enumerate}

In the next section, we will define precisely what we mean as a solution of \eqref{SEESPM} and establish well-posedness. We will also state an existence and uniqueness result for a solution of \eqref{SEESPM}, for every control input  $u\in \U^{ad}$

In the sequel, we consider the cost function $\JJ: \XX\times \U \longrightarrow \R$ defined by 
\begin{equation}\label{cost}
	\JJ(X,u):=\E\left[ \int_{0}^{T}f(t,x_t,u_t)dt+h(x_T)\right].
\end{equation}

This cost is supposed to satisfy the following assumptions. 
\begin{enumerate}[label={$(A_{\arabic*})$}, resume,labelwidth=0.8cm]
	\item{\label{hyp:3:Cost}} The mapping $(\omega,t)\mapsto f(t,x,u)$ is progressively measurable for every $(x,u)\in V_0\times U$.
	
	The mapping $(x,u)\mapsto f(t,x,u)$ belongs to $C^1(V_0\times U,\R)$ for almost every $(\omega,t)\in \Omega\times[0,T]$. Moreover, the derivatives satisfy $$\norme{\partial_{x,u}f(t,x,u)}_{{V_0}'\times{U}'}\le M_f(\omega,t)+C\norme{x}_{V_0}+C\norme{u}_{U},$$ for some $M_f\in L_{\F}^2(\Omega;L^2(0,T))$, and $C\ge 0$.
	Moreover, we assume that $(\omega,t)\mapsto f(t,0,0)\in L_{\F}^1(\Omega\times[0,T])$.
	
	\item{\label{hyp:4:Cost}} The mapping $\omega\mapsto h(x)$ is $\FF_T$-measurable for every $x\in H$.
	
	The mapping $x\mapsto h(x)$ belongs to $C^1(H,\R)$ for almost every $\omega\in \Omega$. Moreover, the derivatives satisfy $$\norme{\partial_{x}h(x)}_{H'}\le M_h(\omega)+C\norme{x}_{H},$$ for some $M_h\in L_{\FF_T}^2(\Omega)$, and $C\ge 0$. Moreover, we assume  that $\omega\mapsto h(0)\in L_{\F}^1(\Omega)$.
	
\end{enumerate}

In this work, are interested in the following optimal control problem:

\begin{equation}\label{ProbPSPM}
	\mbox{Minimize } \left\{\JJ(x,u),\text{ } x\in \XX, \text{ } u\in \U^{ad}, \text{ and } (x,u) \ \mbox{satisfies \eqref{SEESPM}}\right\}.\tag{$\mathcal{P}$}
\end{equation}

In the following, we assume that the control problem admits an optimal solution. Our goal is to derive the necessary optimality conditions that this solution must satisfy, based on first-order variations of the objective functional.

In some situations, we are able to obtain a Pontryagin type maximum principle, which is stronger than a first order optimality condition and doesn't involve derivatives with respect to the control. The main trade-off is that $\sigma$ may not depend on $u$ and we need to strengthen some of the conditions on the dynamics and cost functions. So for some results, we will assume the following additional hypothesis.

\begin{enumerate}[label={$(\tilde{A})$},labelwidth=0.8cm]
	\item{\label{hyp:PMP}} Suppose that for all $u_1,u_2\in U^{ad}$, $x\in V_0$ and $(\omega,t)\in\Omega\times[0,T]$, $$\norme{A(t,x,u_1)-A(t,x,u_2)}_{H}\le C,$$ for some uniform constant $C$. The function $\sigma$ does not depend on $u$. Assume as well that the mappings $(x,u)\mapsto (\partial_x A(t,x,u))^*,(\partial_x \sigma(t,x))^*$ are strongly continuous, the derivative $(x,u)\mapsto \partial_{x}f(t,x,u)$ is norm continuous from $V_0\times U$ to ${V_0}'$, as is $x\mapsto \partial_x h(x)$ from $H$ to $H'$.
\end{enumerate}

\subsection{Main result}
In the following sections, we will first focus on the regularity results of the solution to the state equation and the regularity of the mapping that associates the state to the control. 

Next, we will establish the optimality conditions for problem \eqref{ProbPSPM}, which we can already state in the following theorem. Doing so requires the introduction of the costate equation, whose solutions take values in the costate space \begin{equation*}
\PP:=L_{\F}^2(\Omega;C([0,T],H'))\cap L_{\F}^2(\Omega;L^2(0,T;{V_1}')),
\qquad\QQ:=L_{\F}^2(\Omega;L^2(0,T;\LL_0')).
\end{equation*}

\begin{theorem}\label{th:main}
	Assume that Hypotheses \ref{hyp:dyn:1}-\ref{hyp:4:Cost} hold. 
	Let $\bar x \in \XX $ and $\bar u\in \U^{ad}$.
	Assume that $(\bar{x},\bar{u})$ is an optimal pair for problem \eqref{ProbPSPM}. Then, there 
	exists a process $(p,q)\in \PP\times\QQ$, such that
	\begin{subequations}\label{thm:CO}  
		\begin{enumerate}
			\item[-] The process $(p,q)$ is the solution of \begin{equation}\label{BSEESPM:main}
				\begin{split}
					dp_t&=-[(\partial_xA(t,\bar{x}_t,\bar{u}_t))^* p_t+(\partial_x\sigma(t,\bar{x}_t,\bar{u}_t))^*q_t-\partial_x f(t,\bar{x}_t,\bar{u}_t)]dt+q_tdW_t\\
					p_T&=-\partial_x h(x_T)\in H',
				\end{split}
			\end{equation}
			\if{\begin{equation}\label{CoStateEqSPM}\begin{array}{rll}
					dp_t(\xi)&=-\left[\Psi'(x_t(\xi))\Delta p_t(\xi)+\sum_{k=1}^{\infty} \mu_k q^k_t(\xi)e_k(\xi)-\frac{\partial f}{\partial x}(x_t,\xi,t)\right]dt&\\
					&\qquad+\sum_{k=1}^{\infty} q^k_t(\xi)dw^{k}_{t},&(\xi,t)\in\OO\times(0,T)\\
					p_t(\xi)&=0,&(\xi,t)\in\partial\OO\times(0,T)\\
					p_T(\xi)&=-g(\xi),&\xi\in\OO
				\end{array}
			\end{equation}}\fi
			\item[-] For a.e. $(\omega,t)\in \Omega\times (0,T)$, it holds that \begin{equation}\label{optCondDualPointwise}
				\langle v_t,(\partial_u A(t,\bar{x}_t,\bar{u}_t))^*p_t+(\partial_u \sigma(t,\bar{x}_t,\bar{u}_t))^*q_t-\partial_u f(t,\bar{x}_t,\bar{u}_t)\rangle_{U,U'}\le 0\quad\forall v\in T^b_{U^{ad}}(\bar{u}_t).
			\end{equation}
			\item[-] Assume \ref{hyp:PMP} holds, then for a.e. $(\omega,t)\in \Omega\times (0,T)$, it holds that \begin{equation}\label{optCondPMP}
				\langle A(t,\bar{x}_t,\bar{u}_t),p_t\rangle_{V_1,{V_1}'}-f(t,\bar{x}_t,\bar{u}_t)\ge \langle A(t,\bar{x}_t,u),p_t\rangle_{V_1,{V_1}'}-f(t,\bar{x}_t,u)\quad\forall u\in U^{ad}.
			\end{equation}
			\if{\begin{equation}
				\label{optCondDualPointwiseInterior}
				v^\OO[p_t(\xi)-\partial_uf(\bar X_t(\xi),\bar u^{\OO}_t(\xi),t)]\le0\quad\forall v^{\OO}\in T^b_{U^{\OO}(\xi)}(\bar{u}^{\OO}_t).
			\end{equation} 
			\item[-] For a.e. $(\omega,t,S)\in \Omega\times(0,T)\times\partial\OO$ it holds that
			\begin{equation}
				\label{optCondDualPointwiseBoundary}
				v^\partial\left[\frac{\partial p_t}{\partial\nu}(S)-\partial_uh(\bar u^{\partial}_t(S),t)\right]\le0\quad\forall v^{\partial}\in T^b_{U^{\partial}}(\bar{u}^{\partial}_t(S)).
			\end{equation}}\fi
		\end{enumerate}
	\end{subequations}
\end{theorem}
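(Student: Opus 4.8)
The plan is to establish the three conclusions in turn: the existence of a costate pair $(p,q)\in\PP\times\QQ$ solving the backward equation \eqref{BSEESPM:main}, the first order condition \eqref{optCondDualPointwise}, and the global maximum principle \eqref{optCondPMP} under \ref{hyp:PMP}. For the existence of the costate, I observe that \eqref{BSEESPM:main} is a linear backward stochastic evolution equation posed in the dual Gelfand triple ${V_1}'\subset H'\subset{V_0}'$, with progressively measurable coefficients $(\partial_xA(t,\bar x_t,\bar u_t))^{*}$ and $(\partial_x\sigma(t,\bar x_t,\bar u_t))^{*}$. The coercivity estimate \eqref{AbsCoer}, read through the adjoint, supplies exactly the monotonicity required to run a standard existence-uniqueness argument (Galerkin approximation together with energy estimates), while the uniform bounds \eqref{AbsBoundDrift}, \eqref{AbsBoundDiff} and the growth of $\partial_xf$, $\partial_xh$ from \ref{hyp:3:Cost}--\ref{hyp:4:Cost} provide the integrability that places $p$ in $\PP$ and $q$ in $\QQ$.

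For the first order condition I would use convex (directional) variations. Fix $v\in\U$ with $v_t\in T^b_{U^{ad}}(\bar u_t)$ a.e.; by definition of the adjacent cone there exist $v_k\to v$ with $\bar u+\eps_k v_k\in\U^{ad}$, so optimality of $\bar u$ gives $\liminf_k \eps_k^{-1}[\JJ(x^{\eps_k},\bar u+\eps_k v_k)-\JJ(\bar x,\bar u)]\ge 0$. Using the differentiability of the control-to-state map established earlier, the difference quotients converge to the solution $z$ of the linearized equation
\begin{equation*}
dz_t=[\partial_xA(t,\bar x_t,\bar u_t)z_t+\partial_uA(t,\bar x_t,\bar u_t)v_t]dt+[\partial_x\sigma(t,\bar x_t,\bar u_t)z_t+\partial_u\sigma(t,\bar x_t,\bar u_t)v_t]dW_t,\quad z_0=0,
\end{equation*}
and the cost variation converges to $\E\int_0^T[\langle\partial_xf,z_t\rangle+\langle\partial_uf,v_t\rangle]dt+\E\langle\partial_xh(\bar x_T),z_T\rangle\ge0$. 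The crucial step is to apply the It\^o product formula to the pairing $\langle z_t,p_t\rangle$ between the forward process $z$ and the backward costate $p$: the terms $\langle\partial_xA\,z_t,p_t\rangle-\langle z_t,(\partial_xA)^{*}p_t\rangle$ cancel by definition of the adjoint, the $H$-valued cross-variation of the martingale parts cancels the $\langle z_t,(\partial_x\sigma)^{*}q_t\rangle$ contribution, and $p_T=-\partial_xh(\bar x_T)$ converts the endpoint term, leaving
\begin{equation*}
\E\langle z_T,\partial_xh(\bar x_T)\rangle+\E\int_0^T\langle\partial_xf,z_t\rangle\,dt=-\E\int_0^T\langle v_t,(\partial_uA)^{*}p_t+(\partial_u\sigma)^{*}q_t\rangle\,dt.
\end{equation*}
Substituting this into the cost variation makes the $\langle\partial_xf,z_t\rangle$ terms cancel and yields $\E\int_0^T\langle v_t,(\partial_uA)^{*}p_t+(\partial_u\sigma)^{*}q_t-\partial_uf\rangle\,dt\le0$; a measurable-selection and localization argument (testing with directions supported on small sets in $\Om\times[0,T]$) then upgrades this integrated inequality to the pointwise statement \eqref{optCondDualPointwise}.

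For the maximum principle, under \ref{hyp:PMP} we have $\partial_u\sigma\equiv0$, which removes any second order correction and lets me replace convex variations by spike variations: for a fixed $u\in U^{ad}$ and a measurable set $E_\eps\subset[0,T]$ of measure $\eps$, set $u^\eps=u$ on $E_\eps$ and $u^\eps=\bar u$ elsewhere. The uniform bound $\norme{A(t,x,u)-A(t,x,\bar u_t)}_H\le C$ from \ref{hyp:PMP} keeps the state perturbation controlled, so that $\delta x=x^\eps-\bar x$ solves, up to $o(\eps)$ in the appropriate norm, the linearized equation with the drift forcing $[A(t,\bar x_t,u)-A(t,\bar x_t,\bar u_t)]\1_{E_\eps}(t)$ and no additional diffusion term. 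Running the same It\^o-pairing computation and applying a Lebesgue differentiation argument (with $E_\eps$ shrinking to density points) together with a standard conditioning argument to pass inside the expectation, the inequality $\JJ(x^\eps,u^\eps)-\JJ(\bar x,\bar u)\ge0$ becomes, for a.e. $(\omega,t)$, $[f(t,\bar x_t,u)-f(t,\bar x_t,\bar u_t)]-\langle A(t,\bar x_t,u)-A(t,\bar x_t,\bar u_t),p_t\rangle\ge0$, which is precisely \eqref{optCondPMP}.

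The main obstacle I anticipate is the rigorous justification of the It\^o product formula for $\langle z_t,p_t\rangle$: the forward process $z$ lives in the triple $V_0\subset H\subset V_1$ while the backward process $p$ lives in the dual triple, the coefficients sit only in $\LL(V_0,V_1)$ and its adjoint, and neither process is a priori regular enough for a naive application. This forces an approximation scheme, most plausibly a finite-dimensional Galerkin projection of both the forward and backward equations, where the duality computation is performed at the projected level with genuine inner products and the limit is taken using the a priori estimates from \ref{hyp:dyn:1}--\ref{hyp:dyn:2}. A secondary difficulty is controlling the $o(\eps)$ remainder in the spike-variation estimate uniformly enough to differentiate; here the boundedness of the $u$-increment of $A$ in the $H$-norm granted by \ref{hyp:PMP} is exactly what closes the estimate and avoids introducing a second order adjoint process.
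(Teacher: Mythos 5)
Your overall architecture coincides with the paper's: the costate is obtained as the solution of a linear backward stochastic evolution equation (though the paper simply invokes \cite[Theorem 4.1]{hu1991adapted} rather than running a Galerkin construction), the duality relation comes from the It\^o product formula in the dual Gelfand triple (the paper gets it from \cite[Theorem 4.2.5]{prevot2007concise} plus polarization, no Galerkin scheme needed), condition \eqref{optCondDualPointwise} comes from adjacent-cone variations plus a pointwise localization lemma (the paper cites \cite[Lemma 4.6]{wang2017optimal} and \cite[Lemma 3.2]{frankowska2020first}), and \eqref{optCondPMP} comes from spike variations. However, there is one genuine gap, in the spike-variation step.

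You claim that the state perturbation solves the spike-linearized equation ``up to $o(\eps)$ in the appropriate norm,'' i.e., that $r^\eps:=x^{u^\eps}-\bar x-z^\eps$ satisfies $\norme{r^\eps}_{\XX}=o(\eps)$. This cannot be established under the standing hypotheses. The remainder is driven by terms of the form $[A^\eps_{t,x}-\partial_xA(t,\bar x_t,\bar u_t)](r^\eps_t+z^\eps_t)/\eps$, where $A^\eps_{t,x}$ is the mean-value operator; the family $(r^\eps+z^\eps)/\eps$ is merely bounded in $\XX$ (neither convergent nor compact), and \ref{hyp:PMP} grants only \emph{strong} (pointwise) continuity of $(\partial_xA)^*$ and $(\partial_x\sigma)^*$, not norm continuity of $\partial_xA$ or $\partial_x\sigma$. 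Strong operator convergence applied to a bounded family yields, after dualizing against test functions, only weak convergence; this is precisely why Proposition \ref{VarSpike} asserts $\frac1\eps[x^{u^\eps}-x^u-z^\eps]\rightharpoonup 0$ in $\XX$ and nothing stronger. The distinction matters downstream: in the expansion of $\JJ(x^\eps,u^\eps)-\JJ(\bar x,\bar u)$ (the paper's relation \eqref{EqLandau}), the remainder $r^\eps/\eps$ is paired against mean-value derivatives of $f$ and $h$, and with only weak convergence of $r^\eps/\eps$ one needs the \emph{norm} continuity of $\partial_xf$ and $\partial_xh$ — assumed in \ref{hyp:PMP} exactly for this purpose, and nowhere used in your argument — to pass to the limit. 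Your proof is repairable by replacing the claimed strong estimate with the weak statement and invoking those norm-continuity assumptions, which is what the paper does; as written, the strong $o(\eps)$ estimate is unjustified and half of hypothesis \ref{hyp:PMP} plays no role, which is a sign the argument cannot be right as stated.

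A secondary remark, short of a gap: for the passage from the integrated inequality to the pointwise condition \eqref{optCondDualPointwise}, ``testing with directions supported on small sets'' does not suffice by itself; one must also know that measurable selections of the pointwise cones $T^b_{U^{ad}}(\bar u_t)$ assemble into adjacent directions of $\U^{ad}$ in $\U$, which is the actual content of the lemma the paper cites. Similarly, your Galerkin route to existence of $(p,q)$ would need the martingale representation theorem for the filtration generated by $W$ to produce the process $q$; citing the known result for linear BSEEs, as the paper does, avoids rebuilding this machinery.
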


\begin{remark}
	Condition \eqref{optCondPMP} holds without differenciability with respect to $u$, or even without the vector space structure of $U$, it is sufficient to have controls in a Polish space and continuous cost and dynamics w.r.t. $u$.
\end{remark}

\subsection{Comments on the main result.} 
Let us note that for the model problem \eqref{StateEqIntroSPM}, the optimality conditions have been established in the literature in the case where $\Psi' \equiv 1$, which corresponds to the stochastic heat equation. We refer to \cite{hasnaa1, Li_Yong_95, lasiecka2000control} to cite some of the works addressing the deterministic case, which corresponds to the scenario where all the parameters $\mu_k$ in the state equation are zero. The stochastic problem has also been analyzed in \cite{bensoussan1983stochastic,hu1990maximum,zhou1993necessary,guatteri2011stochastic,lu2014general,PV}, again for the case where $\Psi' \equiv 1$.
Moreover, it should be noted that in the deterministic case with $\Psi' \neq 1$, the optimality conditions have been studied in the monograph \cite{marinoschi}.

\section{State Equation and Adjoint Equation}

In this section we study both the forward and backward equations introduced previously. We provide results on the sensitivity of trajectories with respect to variations in the control. We will use throughout the variational framework of \cite{KR81}. We refer the textbook \cite{prevot2007concise}, in particular, we invite the reader to have in mind the standard well-posedness result \cite[Theorem 4.2.4]{prevot2007concise} and the It\^o formula \cite[Theorem 4.2.5]{prevot2007concise}.

\subsection{State equation}

We consider solutions of the state equation \eqref{SEESPM} in the following sense.

\begin{defi}\label{DefSol}
	We say that $x\in \XX$ is a solution of \eqref{SEESPM} if $A(\cdot,x_{(\cdot)},u_{(\cdot)})\in L_{\F}^2(\Omega;L^2(0,T;V_1))$ and $x$ satisfies \begin{equation}\label{eqDefSol}	x_t=x_0+\int_{0}^{t}\left[A(s,x_s,u_s)ds+\sigma(s,x_s,u_s)dW_s\right],\quad \forall t\in[0,T],\text{ a.s.}
	\end{equation} in $V_1$.
\end{defi}

\begin{prop}\label{propWP}
	For every $x_0\in H$ and $u\in \U$, equation \eqref{SEESPM} admits a unique solution. Moreover, letting $c_1$ be the constant for the Burkholder-Davis-Gundy inequality with $p=1$, the following bound holds \begin{align}
		&\frac{1}{2}\E\sup_{t\in[0,T]}\norme{x_t}_{H}^2+M_*\E\int_{0}^{T}\norme{x_t}_{V_0}^2dt\notag\\\le& e^{2(\alpha+L^2(1+2c_1^2))T}\left(\norme{x_0}_{H}^2+2\E\int_{0}^{T}\left[\frac{1}{M_*}\norme{A(t,0,u_t)}_{V_1}^2+(L^2(1+2c_1^2))\norme{\sigma(t,0,u_t)}_{\LL_0}^2\right]dt\right).
	\end{align}

    Moreover, the control to state mapping is Lipschitz, more precisely: Let $x$, $x'$ be solutions of \eqref{SEESPM} with data $u$, $u'\in \U$ respectively. Then \begin{equation}\begin{split}
			&\frac{1}{2}\E\sup_{t\in[0,T]}\norme{x_t-x_t'}_{H}^2+M_*\E\int_{0}^{T}\norme{x_t-x_t'}_{V_0}^2dt\\&\qquad\le e^{2(\alpha+L^2(1+2c_1^2))T}\left(\frac{B^2}{M_*}+2L^2(1+2c_1^2)\right)\E\int_{0}^{T}\norme{u_t-u_t'}_{U}dt.
		\end{split}
	\end{equation}
\end{prop}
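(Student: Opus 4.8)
The plan is to treat \eqref{SEESPM} with a frozen control $u\in\U$ as a stochastic evolution equation in the Gelfand triple $V_0\subset H\subset V_1$ and to invoke the variational well-posedness theorem of \cite{KR81} in the form \cite[Theorem 4.2.4]{prevot2007concise}. Setting $A_u(t,x):=A(t,x,u_t)$ and $\sigma_u(t,x):=\sigma(t,x,u_t)$, the measurability parts of \ref{hyp:dyn:1} and \ref{hyp:dyn:2} make $(\omega,t)\mapsto A_u(t,x)$ and $(\omega,t)\mapsto\sigma_u(t,x)$ progressively measurable, and the uniform bounds on $\partial_uA$, $\partial_u\sigma$ together with $u\in\U$ ensure that $t\mapsto\norme{A_u(t,0)}_{V_1}$ and $t\mapsto\norme{\sigma_u(t,0)}_{\LL_0}$ belong to $L^2(\Omega\times[0,T])$. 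I would then check that the four structural conditions of the theorem (hemicontinuity, weak monotonicity, coercivity, and boundedness/linear growth) hold; existence and uniqueness of $x\in\XX$ follow at once.

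The essential point is that these integrated conditions follow from the \emph{pointwise} derivative hypotheses by the fundamental theorem of calculus along segments. Hemicontinuity is immediate since $A$ is $C^1$ in $x$. For monotonicity, writing $A(t,x,u)-A(t,y,u)=\int_0^1\partial_xA(t,y+s(x-y),u)(x-y)\,ds$ and pairing with $x-y$, \eqref{AbsCoer} gives $\langle A(t,x,u)-A(t,y,u),x-y\rangle_{V_1,V_0}\le -M_*\norme{x-y}_{V_0}^2+\alpha\norme{x-y}_H^2$, while \eqref{AbsBoundDiff} yields the Lipschitz bound $\norme{\sigma(t,x,u)-\sigma(t,y,u)}_{\LL_0}\le L\norme{x-y}_H$; combining these produces weak monotonicity. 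Taking $y=0$ in the same identity and using Young's inequality $2\langle A(t,0,u),x\rangle_{V_1,V_0}\le M_*\norme{x}_{V_0}^2+M_*^{-1}\norme{A(t,0,u)}_{V_1}^2$ to absorb the zero-state term into the coercive $-2M_*\norme{x}_{V_0}^2$ gives $2\langle A(t,x,u),x\rangle_{V_1,V_0}+\norme{\sigma(t,x,u)}_{\LL_0}^2\le -M_*\norme{x}_{V_0}^2+(2\alpha+2L^2)\norme{x}_H^2+F_t$, with forcing $F_t=M_*^{-1}\norme{A(t,0,u)}_{V_1}^2+2\norme{\sigma(t,0,u)}_{\LL_0}^2\in L^1(\Omega\times[0,T])$; this is coercivity with exponent $2$. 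Finally \eqref{AbsBoundDrift} gives $\norme{A(t,x,u)}_{V_1}\le\norme{A(t,0,u)}_{V_1}+B\norme{x}_{V_0}$, which is the required linear growth.

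For the a priori bound I would apply the It\^o formula \cite[Theorem 4.2.5]{prevot2007concise} to $\norme{x_t}_H^2$ and insert the coercivity inequality above, absorbing $-M_*\norme{x_s}_{V_0}^2$ onto the left. This controls $\norme{x_t}_H^2+M_*\int_0^t\norme{x_s}_{V_0}^2\,ds$ by $\norme{x_0}_H^2$, a term $(2\alpha+2L^2)\int_0^t\norme{x_s}_H^2\,ds$, the forcing $\int_0^tF_s\,ds$, and the martingale $2\int_0^t\langle x_s,\sigma(s,x_s,u_s)\,dW_s\rangle_H$. Taking the supremum over $t$ and then expectation, the Burkholder--Davis--Gundy inequality with constant $c_1$, after a Young splitting, bounds the martingale by $\tfrac12\E\sup_{s\le t}\norme{x_s}_H^2+2c_1^2\E\int_0^t\norme{\sigma(s,x_s,u_s)}_{\LL_0}^2\,ds$; the first term is absorbed on the left (this is the source of the factor $\tfrac12$ in the statement), and $\norme{\sigma}_{\LL_0}^2\le 2L^2\norme{x}_H^2+2\norme{\sigma(\cdot,0,u)}_{\LL_0}^2$ turns the second into a $\norme{x}_H^2$-contribution $4c_1^2L^2$ together with the stated forcing. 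Gronwall's inequality, now with total $\norme{x}_H^2$-coefficient $2(\alpha+L^2(1+2c_1^2))$, produces the asserted estimate.

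The Lipschitz estimate follows from the \emph{same} energy method applied to $\delta x:=x-x'$, which solves \eqref{SEESPM} with null initial datum and increments $A(t,x_t,u_t)-A(t,x_t',u_t')$ and $\sigma(t,x_t,u_t)-\sigma(t,x_t',u_t')$. I would split each increment into a state part, handled by the segment argument exactly as above and giving $-M_*\norme{\delta x}_{V_0}^2+\alpha\norme{\delta x}_H^2$ and $L\norme{\delta x}_H$, and a control part, bounded by $B\norme{u_t-u_t'}_U$ and $L\norme{u_t-u_t'}_U$ via \eqref{AbsBoundDrift} and \eqref{AbsBoundDiff}; the control differences then play the role of the forcing. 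Young's inequality $B\norme{u_t-u_t'}_U\norme{\delta x_t}_{V_0}\le\tfrac{M_*}{2}\norme{\delta x_t}_{V_0}^2+\tfrac{B^2}{2M_*}\norme{u_t-u_t'}_U^2$ absorbs the drift contribution into the coercive term and, after the same It\^o/BDG/Gronwall steps, yields the coefficients $\tfrac{B^2}{M_*}$ and $2L^2(1+2c_1^2)$. The main obstacle throughout is bookkeeping rather than conceptual: the careful passage from the pointwise derivative hypotheses to the integrated monotonicity and coercivity, and the tracking of constants through the Burkholder--Davis--Gundy and Young splittings. The analytic engine---the variational well-posedness theorem and the $H$-valued It\^o formula---is supplied verbatim by \cite{prevot2007concise}.
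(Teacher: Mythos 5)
Your proposal is correct and follows essentially the same route as the paper: existence and uniqueness via the variational theorem \cite[Theorem 4.2.4]{prevot2007concise} after deriving Lipschitz continuity, weak monotonicity, and coercivity from the pointwise derivative hypotheses \eqref{AbsBoundDrift}, \eqref{AbsBoundDiff}, \eqref{AbsCoer}, then the It\^o formula, the Burkholder--Davis--Gundy inequality with a Young splitting, and the Gr\"onwall lemma for both the a priori bound and the Lipschitz estimate. The only differences are cosmetic: you make explicit the segment (fundamental theorem of calculus) argument that the paper leaves implicit when passing from \eqref{AbsCoer} to \eqref{AbsMonotone}, and your coercivity bookkeeping keeps $A(t,0,u)$ where the paper splits further to $A(t,0,0)$.
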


\begin{proof}
	Existence and uniqueness is proved in \cite[Theorem 4.2.4]{prevot2007concise}, we verify the remaining hypotheses:
	
	As a consequence of \eqref{AbsBoundDrift} (resp. \eqref{AbsBoundDiff}), $A$ (resp. $\sigma$) is $B$(resp. $L$)-Lipschitz continuous.
	
	Similarly, because of \eqref{AbsCoer} we have \begin{equation*}
		\langle A(t, x, u),x\rangle_{V_1,V_0}
		\le -\frac{M_*}{2}\norme{x}_{V_0}^2+\alpha\norme{x}_{H}^2+\frac{1}{M_*}\left[B^2\norme{u}_{U}^2+\norme{A(t,0,0)}_{V_1}^2\right].
	\end{equation*}
	
	Lastly, we have weak monotonicity for $A$: \begin{equation}
		\langle A(t,x_1,u)-A(t,x_2,u),x_1-x_2\rangle_{V_1,V_0}
		\label{AbsMonotone}\le -M_*\norme{x_1-x_2}_{V_0}^2+\alpha\norme{x_1-x_2}_{H}^2.
	\end{equation}
	
	For the estimate we use the It\^o formula \cite[Theorem 4.2.5]{prevot2007concise}, 
	\if{
	\begin{equation*}
		\norme{x_t}_{H}^2=\norme{x_0}_{H}^2+\int_{0}^{t}\left[2\langle A(s,x_s,u_s),x_s\rangle_{V_0,V_1}+\norme{\sigma(s,x_s,u_s)}_{\LL_0}^2\right]ds+2\int_{0}^{t}\langle \sigma(s,x_s,u_s)dW_s,x_s\rangle_{H},
	\end{equation*} with which \begin{align*}
		\sup_{s\in[0,t]}\norme{x_s}_{H}^2-&2\int_{0}^{t}\langle A(s,x_s,u_s)-A(s,0,u_s),x_s\rangle_{V_0,V_1}ds\le\norme{x_0}_{H}^2+2\sup_{s\in[0,t]}\left|\int_{0}^{t}\langle \sigma(s,x_s,u_s)dW_s,x_s\rangle_{H}\right|\\+&\int_{0}^{t}\left[\frac{1}{M_*}\norme{A(s,0,u_s)}_{V_1}^2+M_*\norme{x_s}_{V_0}^2+\norme{\sigma(s,x_s,u_s)}_{\LL_0}^2\right]ds,
	\end{align*} applying now}\fi the coercivity inequality \eqref{AbsMonotone}, the Burkholder-Davis-Gundy inequality \if{\begin{align*}
		&\E\sup_{s\in[0,t]}\norme{x_s}_{H}^2+M_*\E\int_{0}^{t}\norme{x_s}_{V_0}^2ds
		\le \norme{x_0}_{H}^2+2c_1\E\left[\left(\int_{0}^{t}\norme{\sigma(s,x_s,u_s)}_{\LL_0}^2\norme{x_s}_{H}^2ds\right)^{1/2}\right]\\
		&\qquad+\E\int_{0}^{t}\left[2\alpha\norme{x_s}_{H}^2+\frac{2}{M_*}\norme{A(s,0,u_s)}_{V_1}^2+2L^2\norme{x_s}_{H}^2+2L^2\norme{\sigma(s,0,u_s)}_{\LL_0}^2\right]ds\\
		&\quad\le\norme{x_0}_{H}^2+2c_1\E\left[\sup_{s\in[0,t]}\norme{x_s}_{H}\left(\int_{0}^{t} \!\!\norme{\sigma(s,x_s,u_s)}_{\LL_0}^2ds\right)^{1/2}\right]\\
		&\qquad+2\E\int_{0}^{t}\left[(\alpha+L^2)\norme{x_s}_{H}^2+\frac{1}{M_*}\norme{A(s,0,u_s)}_{V_1}^2+L^2\norme{\sigma(s,0,u_s)}_{\LL_0}^2\right]ds\\
		&\quad\le\norme{x_0}_{H}^2+\frac{1}{2}\E\sup_{s\in[0,t]}\norme{x_s}_{H}^2+2c_1^2\E\int_{0}^{t} \norme{\sigma(s,x_s,u_s)}_{\LL_0}^2ds\\
		&\qquad+2\E\int_{0}^{t}\left[(\alpha+L^2)\norme{x_s}_{H}^2+\frac{1}{M_*}\norme{A(s,0,u_s)}_{V_1}^2+L^2\norme{\sigma(s,0,u_s)}_{\LL_0}^2\right]ds,
	\end{align*} so that \begin{equation*}
	\begin{split}
		\frac{1}{2}\E\sup_{s\in[0,t]}\norme{x_s}_{H}^2+M_*\E\int_{0}^{t}\norme{x_s}_{V_0}^2ds&\le\norme{x_0}_{H}^2+2\E\int_{0}^{t}\left[(\alpha+L^2(1+2c_1^2))\norme{x_s}_{H}^2\right.\\&\qquad\left.+\frac{1}{M_*}\norme{A(s,0,u_s)}_{V_1}^2+(L^2(1+2c_1^2))\norme{\sigma(s,0,u_s)}_{\LL_0}^2\right]ds.
	\end{split}
\end{equation*}

With this, the result follows from}\fi and the Gr\"onwall Lemma.

\if{We are now in a position to establish}\fi The proof of the Lipschitz continuous dependance of the state with respect to the control is similar.

\end{proof}

	

\if{\begin{proof}
	Follows as well from the It\^o formula \cite[Theorem 4.2.5]{prevot2007concise}:
\begin{align*}
		\norme{x_t-x_t'}_{H}^2=&2\int_{0}^{t}\left[\langle A(s,x_s,u_s)-A(s,x_s',u_s'),x_s-x_s'\rangle_{V_1,V_0}\right]ds\\& +\int_{0}^{t}\norme{\sigma(s,x_s,u_s)-\sigma(s,x_s',u_s')}_{\LL_0}^2ds
    +2\int_{0}^{t}\langle (\sigma(s,x_s,u_s)-\sigma(s,x_s',u_s'))dW_s,x_s-x_s'\rangle_{H}\\
		=&2\int_{0}^{t}\left[\langle A(s,x_s,u_s)-A(s,x_s',u_s),x_s-x_s'\rangle_{V_1,V_0}\right]ds+\int_{0}^{t}\norme{\sigma(s,x_s,u_s)-\sigma(s,x_s',u_s')}_{\LL_0}^2ds
\\&+2\int_{0}^{t}\left[\langle A(s,x_s',u_s)-A(s,x_s',u_s'),x_s-x_s'\rangle_{V_1,V_0}\right]ds\\&+2\int_{0}^{t}\langle (\sigma(s,x_s,u_s)-\sigma(s,x_s',u_s'))dW_s,x_s-x_s'\rangle_{H},
	\end{align*} with which \begin{align*}
		\sup_{s\in[0,t]}\norme{x_s-x_s'}_{H}^2+2M_*\int_{0}^{t}\norme{x_s-x_s'}_{V_0}^2ds \le&2\int_{0}^{t}\left[(\alpha+L^2)\norme{x_s-x_s'}_{H}^2+L^2\norme{u_s-u_s'}_{U}\right]ds\\&+\int_{0}^{t}\left[\frac{B^2}{M_*}\norme{u_s-u_s'}_{U}^2+M_*\norme{x_s-x_s'}_{V_0}^2\right]ds\\&+2\sup_{s\in[0,t]}\left|\int_{0}^{t}\langle (\sigma(s,x_s,u_s)-\sigma(s,x_s',u_s'))dW_s,x_s-x_s'\rangle_{H}\right|,
	\end{align*} 
	applying now the Burkholder-Davis-Gundy inequality 
	\begin{align*}
		&\E\sup_{s\in[0,t]}\norme{x_s-x_s'}_{H}^2+M_*\E\int_{0}^{t}\norme{x_s-x_s'}_{V_0}^2ds\\\le&\E\int_{0}^{t}\left[2(\alpha+L^2)\norme{x_s-x_s'}_{H}^2+\left(\frac{B^2}{M_*}+2L^2\right)\norme{u_s-u_s'}_{U}\right]ds\\&\qquad+2c_1\E\left[\left(\int_{0}^{t}\norme{\sigma(s,x_s,u_s)-\sigma(s,x_s',u_s')}_{\LL_0}^2\norme{x_s-x_s'}_{H^{-1}}^2ds\right)^{1/2}\right]\\\le&\E\int_{0}^{t}\left[2(\alpha+L^2)\norme{x_s-x_s'}_{H}^2+\left(\frac{B^2}{M_*}+2L^2\right)\norme{u_s-u_s'}_{U}\right]ds\\&\qquad+2c_1\E\left[\sup_{s\in[0,t]}\norme{x_s-x_s'}_{H}\left(\int_{0}^{t} \norme{\sigma(s,x_s,u_s)-\sigma(s,x_s',u_s')}_{\LL_0}^2ds\right)^{1/2}\right]\\\le&\E\int_{0}^{t}\left[2(\alpha+L^2)\norme{x_s-x_s'}_{H}^2+\left(\frac{B^2}{M_*}+2L^2\right)\norme{u_s-u_s'}_{U}\right]ds\\&\qquad+\frac{1}{2}\E\sup_{s\in[0,t]}\norme{x_s-x_s'}_{H}^2+2c_1^2\E\int_{0}^{t} \norme{\sigma(s,x_s,u_s)-\sigma(s,x_s',u_s')}_{\LL_0}^2ds,
	\end{align*} so that $$\frac{1}{2}\E\sup_{s\in[0,t]}\norme{x_s-x_s'}_{H}^2+M_*\E\int_{0}^{t}\norme{x_s-x_s'}_{V_0}^2ds$$ $$\le\E\int_{0}^{t}\left[2(\alpha+L^2(1+2c_1^2))\norme{x_s-x_s'}_{H}^2+\left(\frac{B^2}{M_*}+2L^2(1+2c_1^2)\right)\norme{u_s-u_s'}_{U}\right]ds.$$
    
    With this, the result follows from the Gr\"onwall Lemma.
\end{proof}}\fi

\subsection{Linearized state equation}

\subsubsection{Linear perturbation}

We now fix $u,v\in \U$ and consider the notation $x^u$ for the solution process of Equation \eqref{SEESPM} with $u$ as control. We will now show that $x^{u+\varepsilon v}=x^u+\varepsilon z^{u,v}+o(\varepsilon)$ where $z^{u,v}$ solves a linear equation with $v$ as its data. We introduce the linearized equation \begin{equation}\label{LnrSEESPM}
	\begin{split}
		dz^{u,v}_t&=\left[\partial_xA(t,x^u_t,u_t)z^{u,v}_t+\partial_uA(t,x^u_t,u_t)v_t\right]dt\\&\qquad+\left[\partial_x \sigma(t,x^u_t,u_t)z^{u,v}_t+\partial_u \sigma(t,x^u_t,u_t)v_t\right]dW_t\\
		z^{u,v}_0&=0.
	\end{split}
\end{equation} 


Consider a solution of \eqref{LnrSEESPM} as a process $z^{u,v}\in \XX$ for which \begin{equation}\begin{array}{rl}
		z^{u,v}_t=&\displaystyle\int_{0}^{t}\left[(\partial_xA(s,x^u_s,u_s)z^{u,v}_s+\partial_uA(s,x^u_s,u_s)v_s)ds\right.\\&\qquad\left.+(\partial_x \sigma(s,x^u_s,u_s)z^{u,v}_s+\partial_u \sigma(s,x^u_s,u_s)v_s)dW_s\right]
	\end{array}
	,\quad \forall t\in[0,T],\text{ a.s.}
\end{equation} holds in $V_1$.

Following standard applications of It\^o formula, Burkholder-Davis-Gundy inequality and Gr\"onwall lemma we have.

\begin{prop}\label{propWPLnr}
	Equation \eqref{LnrSEESPM} admits a unique solution. Moreover, letting $c_1$ be the constant for the Burkholder-Davis-Gundy inequality with $p=1$, the following bound holds \begin{equation}
		\frac{1}{2}\E\sup_{t\in[0,T]}\norme{z^{u,v}_t}_{H}^2+M_*\E\int_{0}^{T}\norme{z^{u,v}_t}_{V_0}^2dt\le e^{2(\alpha+L^2(1+2c_1^2))T}\E\int_{0}^{T}\left(\frac{B^2}{M_*}+2L^2(1+2c_1^2)\right)\norme{v_t}_{U}dt.
	\end{equation}

    Moreover, the control to state map $U\ni u\mapsto x^u\in\XX$ is $C^1$ and its derivative is given by $\partial_u[u\mapsto x^u](u).v=z^{u,v}\in\XX$ where $z^{u,v}$ is the solution of Equation \eqref{LnrSEESPM}.
\end{prop}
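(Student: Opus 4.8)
The proposition has two parts: (i) well-posedness and the energy estimate for the linearized equation \eqref{LnrSEESPM}, and (ii) that the control-to-state map is $C^1$ with derivative given by $z^{u,v}$. Let me sketch a plan for each.

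For part (i), the linearized equation is itself a linear monotone SEE of the same type as \eqref{SEESPM}. Indeed, define $\tilde A(t,z) := \partial_x A(t,x^u_t,u_t)z + \partial_u A(t,x^u_t,u_t)v_t$ and $\tilde\sigma(t,z) := \partial_x\sigma(t,x^u_t,u_t)z + \partial_u\sigma(t,x^u_t,u_t)v_t$. The plan is to verify that $\tilde A, \tilde\sigma$ satisfy the hypotheses underlying Proposition \ref{propWP}: boundedness of the derivatives is immediate since $\partial_x A, \partial_u A, \partial_x\sigma, \partial_u\sigma$ are bounded by $B$ and $L$ via \eqref{AbsBoundDrift} and \eqref{AbsBoundDiff}, the free terms $\partial_u A(t,x^u_t,u_t)v_t$ and $\partial_u\sigma(t,x^u_t,u_t)v_t$ lie in $L^2(\Omega\times[0,T])$ because $x^u\in\XX$ and $v\in\U$, and the coercivity \eqref{AbsCoer} of $\partial_x A$ transfers directly to the monotonicity/coercivity needed for the linear operator $z\mapsto\partial_x A\,z$. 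Then existence, uniqueness, and the stated energy bound follow from the same It\^o formula, Burkholder--Davis--Gundy, and Gr\"onwall argument already carried out in Proposition \ref{propWP}, with $u_t-u_t'$ replaced by the forcing $v_t$.

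For part (ii), I would fix $u,v$, set $r^\varepsilon := \varepsilon^{-1}(x^{u+\varepsilon v}-x^u)-z^{u,v}$, and show $\E\sup_t\norme{r^\varepsilon_t}_H^2 + \E\int_0^T\norme{r^\varepsilon_t}_{V_0}^2\,dt\to 0$ as $\varepsilon\to 0$. Writing the equation solved by $r^\varepsilon$, the drift and diffusion are differences of the form $A(t,x^{u+\varepsilon v}_t,u_t+\varepsilon v_t)-A(t,x^u_t,u_t)-\varepsilon\partial_x A\,z^{u,v}_t-\varepsilon\partial_u A\,v_t$ (and similarly for $\sigma$). The plan is to use the fundamental theorem of calculus, writing these increments as integrals of the Gateaux derivatives along the segment joining $(x^u_t,u_t)$ to $(x^{u+\varepsilon v}_t,u_t+\varepsilon v_t)$, and to separate a term linear in $r^\varepsilon$ (handled by coercivity \eqref{AbsCoer} and Gr\"onwall) from a remainder. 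The remainder is controlled by the modulus of continuity of $\partial_x A,\partial_u A$ combined with the a priori estimate $\E\sup_t\norme{x^{u+\varepsilon v}_t-x^u_t}_H^2=O(\varepsilon^2)$ from the Lipschitz bound in Proposition \ref{propWP} (which also yields $\E\int_0^T\norme{z^{u,v}_t}_{V_0}^2\,dt<\infty$, so $z^{u,v}$ visits $V_0$ along a full-measure set of $(\omega,t)$).

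The main obstacle, and the step requiring care, is the convergence of the remainder term. The derivatives $\partial_x A,\partial_u A$ are only assumed strongly continuous (the maps are $C^1$ in the sense defined in the Notations section), not uniformly continuous, so one cannot bound $\norme{\partial_x A(t,y,w)-\partial_x A(t,x^u_t,u_t)}_{\LL(V_0,V_1)}$ uniformly. The resolution I would pursue is a dominated-convergence argument: show that the integrand remainder tends to zero pointwise a.e. $(\omega,t)$ (using strong continuity of the derivatives along the converging arguments, and that $x^{u+\varepsilon v}_t\to x^u_t$ in $V_0$ for a.e. $(\omega,t)$ along a subsequence), while providing an $L^1$-integrable dominating function built from the uniform bounds $B,L$ and the $V_0$-integrability of $z^{u,v}$; then pass to the limit under the expectation and time integral. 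Once the remainder vanishes, Gr\"onwall closes the estimate and gives Gateaux differentiability with derivative $z^{u,v}$; continuity of $v\mapsto z^{u,v}$ in the operator sense (which upgrades Gateaux to $C^1$ Fr\'echet-type differentiability) follows from the linearity of \eqref{LnrSEESPM} in $v$ together with the energy bound in part (i).
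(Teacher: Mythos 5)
Your part (i) and the Gateaux-differentiability argument in part (ii) essentially coincide with the paper's proof: the paper also invokes the standard well-posedness theorem plus It\^o/BDG/Gr\"onwall for the estimate, forms the same mean-value operators $A_{t,x}^\varepsilon z=\int_0^1\partial_x A(t,(1-\theta)x_t^u+\theta x_t^{u+\varepsilon v},u_t+\theta \varepsilon v_t)z\,d\theta$ (and its three analogues), notes that they inherit the bounds $B$, $L$ and the coercivity \eqref{AbsCoer}, writes the equation for $r^\varepsilon=x^{u+\varepsilon v}-x^u-\varepsilon z^{u,v}$, and closes with Gr\"onwall and exactly the dominated-convergence device you describe to cope with the derivatives being only strongly continuous. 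So far, so good.

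The genuine gap is your final sentence. In this paper, $C^1$ means that the Gateaux derivative is \emph{strongly continuous as a function of the base point}: for each fixed $v\in\U$, the map $u\mapsto z^{u,v}$ must be continuous from $\U$ to $\XX$. What you invoke --- linearity of \eqref{LnrSEESPM} in $v$ together with the energy bound of part (i) --- only shows that, at each \emph{fixed} $u$, the operator $v\mapsto z^{u,v}$ is linear and bounded; it says nothing about how $z^{u,v}$ changes when $u$ changes, and no soft argument upgrades one to the other. The paper devotes a separate estimate to precisely this step: setting $\zeta=z^{u,v}-z^{u',v}$, applying the It\^o formula to $\norme{\zeta_t}_H^2$, isolating the coercive term $\langle \partial_xA(s,x^{u'}_s,u_s')\zeta_s,\zeta_s\rangle_{V_1,V_0}$, and bounding the remaining terms, which involve differences such as $(\partial_x A(s,x^{u}_s,u_s)-\partial_x A(s,x^{u'}_s,u_s'))z^{u,v}_s$ and the analogous ones for $\partial_u A$, $\partial_x\sigma$, $\partial_u\sigma$; Gr\"onwall plus dominated convergence (using the strong continuity of the derivatives and the Lipschitz dependence $x^{u'}\to x^u$ from Proposition \ref{propWP}) then yields $\zeta\to0$ in $\XX$ as $u'\to u$. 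Without adding this argument, your proposal establishes only Gateaux differentiability with a bounded linear derivative at each point, not the $C^1$ property asserted in the proposition.
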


\begin{proof}
	Existence and uniqueness is again direct from \cite[Theorem 4.2.4]{prevot2007concise}. The estimate follows from the It\^o formula, the Burkholder-Davis-Gundy inequality and the Gr\"onwall lemma. \if{For the estimate we use the It\^o formula \cite[Theorem 4.2.5]{prevot2007concise}:
	
	\begin{align*}
		\norme{z^{u,v}_t}_{H}^2=&\int_{0}^{t}\left[2\langle \partial_xA(s,x^u_s,u_s)z^{u,v}_s,z^{u,v}_s\rangle_{V_1,V_0}+\norme{\partial_x \sigma(s,x^u_s,u_s)z^{u,v}_s+\partial_u \sigma(s,x^u_s,u_s)v_s}_{\LL_0}^2\right]ds\\&+2\int_{0}^{t}\langle \left[\partial_x \sigma(s,x^u_s,u_s)z^{u,v}_s+\partial_u \sigma(s,x^u_s,u_s)v_s\right]dW_s,z^{u,v}_s\rangle_{H},
	\end{align*} with which \begin{align*}
		&\sup_{s\in[0,t]}\norme{z^{u,v}_s}_{H}^2-2\int_{0}^{t}\langle \partial_xA(s,x^u_s,u_s)z^{u,v}_s,z^{u,v}_s\rangle_{V_1,V_0}ds\\&\quad\le\int_{0}^{t}\left[2\langle \partial_uA(s,x^u_s,u_s)v_s,z^{u,v}_s\rangle_{V_1,V_0}+\norme{\partial_x \sigma(s,x^u_s,u_s)z^{u,v}_s+\partial_u \sigma(s,x^u_s,u_s)v_s}_{\LL_0}^2\right]ds\\&\qquad+2\sup_{s\in[0,t]}\left|\int_{0}^{t}\langle \left[\partial_x \sigma(s,x^u_s,u_s)z^{u,v}_s+\partial_u \sigma(s,x^u_s,u_s)v_s\right]dW_s,z^{u,v}_s\rangle_{H}\right|,
	\end{align*} applying now the Burkholder-Davis-Gundy inequality \begin{align*}
		\E\sup_{s\in[0,t]}\norme{z^{u,v}_s}_{H}^2+&M_*\E\int_{0}^{t}\norme{z^{u,v}_s}_{V_0}^2ds\le\E\int_{0}^{t}\left[2(\alpha+L^2)\norme{z^{u,v}_s}_{H}^2 +\left(\frac{B^2}{M_*}+2L^2\right)\norme{v_s}_{U}\right]ds\\&+2c_1\E\left[\left(\int_{0}^{t}\norme{\partial_x \sigma(s,x^u_s,u_s)z^{u,v}_s+\partial_u \sigma(s,x^u_s,u_s)v_s}_{\LL_0}^2\norme{z^{u,v}_s}_{H}^2ds\right)^{1/2}\right]\\\le&\E\int_{0}^{t}\left[2(\alpha+L^2)\norme{z^{u,v}_s}_{H}^2 +\left(\frac{B^2}{M_*}+2L^2\right)\norme{v_s}_{U}\right]ds\\&+2c_1\E\left[\sup_{s\in[0,t]}\norme{z^{u,v}_s}_{H}\left(\int_{0}^{t} \norme{\partial_x \sigma(s,x^u_s,u_s)z^{u,v}_s+\partial_u \sigma(s,x^u_s,u_s)v_s}_{\LL_0}^2ds\right)^{1/2}\right]\\\le&\E\int_{0}^{t}\left[2(\alpha+L^2)\norme{z^{u,v}_s}_{H}^2 +\left(\frac{B^2}{M_*}+2L^2\right)\norme{v_s}_{U}\right]ds\\&+\frac{1}{2}\E\sup_{s\in[0,t]}\norme{z^{u,v}_s}_{H}^2+2c_1^2\E\int_{0}^{t} \norme{\partial_x \sigma(s,x^u_s,u_s)z^{u,v}_s+\partial_u \sigma(s,x^u_s,u_s)v_s}_{\LL_0}^2ds,
	\end{align*} so that $$\frac{1}{2}\E\sup_{s\in[0,t]}\norme{z^{u,v}_s}_{H}^2+M_*\E\int_{0}^{t}\norme{z^{u,v}_s}_{V_0}^2ds$$ $$\le\E\int_{0}^{t}\left[2(\alpha+L^2(1+2c_1^2))\norme{z^{u,v}_s}_{H}^2+\left(\frac{B^2}{M_*}+2L^2(1+2c_1^2)\right)\norme{v_s}_{U}\right]ds.$$
    
    With this, the result follows once again from the Gr\"onwall lemma.}\fi



	Let $u,v\in \U$, $x$, $x^{u+\varepsilon v}$ and $z^{u,v}$ be as before. We begin by defining for every $\varepsilon$ the $\LL(V,V^*)$-valued stochastic processes $A_{t,x}^\varepsilon$, $A_{t,u}^\varepsilon$, $\sigma_{t,x}^\varepsilon$ and $\sigma_{t,u}^\varepsilon$ defined by $$A_{t,x}^\varepsilon z=\int_0^1\partial_x A(t,(1-\theta)x_t^u+\theta x_t^{u+\varepsilon v},u_t+\theta \varepsilon v_t)zd\theta\quad z\in V_0,$$ $$A_{t,u}^\varepsilon v=\int_0^1\partial_u A(t,(1-\theta)x_t^u+\theta x_t^{u+\varepsilon v},u_t+\theta \varepsilon v_t)vd\theta\quad v\in U,$$ $$\sigma_{t,x}^\varepsilon z=\int_0^1\partial_x \sigma(t,(1-\theta)x_t^u+\theta x_t^{u+\varepsilon v},u_t+\theta \varepsilon v_t)zd\theta\quad z\in V_0,$$ $$\sigma_{t,u}^\varepsilon v=\int_0^1\partial_u \sigma(t,(1-\theta)x_t^u+\theta x_t^{u+\varepsilon v},u_t+\theta \varepsilon v_t)vd\theta\quad v\in U.$$
	
	Notice that $\norme{A_{t,x}^\varepsilon}_{\LL(V_0,V_1)}\le B$ and $\langle A_{t,x}^\varepsilon z,z\rangle_{V_1,V_0}\le-M_*\norme{z}_{V_0}^2+\alpha\norme{z}_{H}^2$ a.e. on $\Omega\times[0,T]$. Similarly, $\norme{A_{t,u}^\varepsilon}_{\LL(U,V_1)}\le B$, $\norme{\sigma_{t,x}^\varepsilon}_{\LL(H,\LL_0)}\le 
	L$ and $\norme{\sigma_{t,u}^\varepsilon}_{\LL(U,\LL_0)}\le L$.
	
	\if{We denote $r^\varepsilon=x^{u+\varepsilon v} -x^u-\varepsilon z^{u,v}$, so that, again by the It\^o formula, it follows that \begin{align*}
		\norme{r^\varepsilon_t}_{H}^2=&2\int_{0}^{t}\left[\langle A_{s,x}^\varepsilon r^\varepsilon_s,r^\varepsilon_s\rangle_{V_1,V_0}+\varepsilon\langle (A_{s,x}^\varepsilon-\partial_x A(s,x^{u}_s,u_s)) z^{u,v}_s+(A_{s,u}^\varepsilon-\partial_u A(s,x^{u}_s,u_s))v_s,r^\varepsilon_s\rangle_{V_1,V_0}\right]ds\\&+\int_{0}^{t}\norme{\sigma_{s,x}^\varepsilon r^\varepsilon_s+\varepsilon((\sigma_{s,x}^\varepsilon-\partial_x \sigma(s,x^{u}_s,u_s)) z^{u,v}_s+(\sigma_{s,u}^\varepsilon-\partial_u \sigma(s,x^{u}_s,u_s))v_s)}_{\LL_0}^2ds\\&+2\int_{0}^{t}\left\langle\left[\sigma_{s,x}^\varepsilon r^\varepsilon_s+\varepsilon((\sigma_{s,x}^\varepsilon-\partial_x \sigma(s,x^{u}_s,u_s)) z^{u,v}_s+(\sigma_{s,u}^\varepsilon-\partial_u \sigma(s,x^{u}_s,u_s))v_s)\right]dW_s,r^\varepsilon_s\right\rangle_{H},
	\end{align*}}\fi
	
	We denote $r^\varepsilon=x^{u+\varepsilon v} -x^u-\varepsilon z^{u,v}$, so that, with similar arguments as before we find that $$\frac{1}{2}\E\sup_{s\in[0,t]}\norme{r^\varepsilon_s}_{H}^2+M_*\E\int_{0}^{t}\norme{r^\varepsilon_s}_{V_0}^2ds\le2\E\int_{0}^{t}(\alpha+L^2(1+2c_1^2))\norme{r^\varepsilon_s}_{H}^2ds$$ $$+\frac{\varepsilon^2}{M_*}\E\int_{0}^{t}\norme{(A_{s,x}^\varepsilon-\partial_x A(s,x^{u}_s,u_s)) z^{u,v}_s+(A_{s,u}^\varepsilon-\partial_u A(s,x^{u}_s,u_s))v_s}_{V_1}^2ds$$
	$$+2(1+2c_1^2)\varepsilon^2\E\int_{0}^{t}\norme{(\sigma_{s,x}^\varepsilon-\partial_x \sigma(s,x^{u}_s,u_s)) z^{u,v}_s+(\sigma_{s,u}^\varepsilon-\partial_u \sigma(s,x^{u}_s,u_s))v_s}_{\LL_0}^2ds.$$
	
	It follows from the Gr\"onwall lemma and the dominated convergence theorem that \begin{equation}
		\frac{1}{2}\E\sup_{t\in[0,T]}\norme{x^{u+\varepsilon v}_t -x^u_t-\varepsilon z^{u,v}_t}_{H}^2+M_*\E\int_{0}^{T}\norme{x^{u+\varepsilon v}_t -x^u_t-\varepsilon z^{u,v}_t}_{V_0}^2dt=o(\varepsilon^2),
	\end{equation} which is in essence the Gateaux differentiability. We have only left to prove that for fixed $v\in\U$, the derivative $\U\ni u\mapsto \partial_u[u\mapsto x^u](u).v=z^{u,v}\in\XX$ is continuous.
	
	Let $u,u',v\in\U$. Denote $\zeta=z^{u,v}-z^{u',v}$. By the It\^o formula, we have \begin{align*}
		\norme{\zeta_t}_{H}^2=&2\int_{0}^{t}\left[\langle \partial_xA(s,x^{u'}_s,u_s') \zeta_s,\zeta_s\rangle_{V_1,V_0}+\langle (\partial_x A(s,x^{u}_s,u_s)-\partial_x A(s,x^{u'}_s,u_s')) z^{u,v}_s,\zeta_s\rangle_{V_1,V_0}\right]ds\\
		&+2\int_{0}^{t} \langle (\partial_u A(s,x^{u}_s,u_s)-\partial_u A(s,x^{u'}_s,u_s')) v_s,\zeta_s\rangle_{V_1,V_0}ds\\
		&+\int_{0}^{t}\norme{\partial_x \sigma(s,x^{u}_s,u_s)z^{u,v}_s-\partial_x \sigma(s,x^{u'}_s,u_s')z^{u',v}_s+(\partial_u \sigma(s,x^{u}_s,u_s)-\partial_u \sigma(s,x^{u'}_s,u_s'))v_s}_{\LL_0}^2ds\\&+2\int_{0}^{t}\left\langle\left[\partial_x \sigma(s,x^{u}_s,u_s)z^{u,v}_s-\partial_x \sigma(s,x^{u'}_s,u_s')z^{u',v}_s+(\partial_u \sigma(s,x^{u}_s,u_s)-\partial_u \sigma(s,x^{u'}_s,u_s'))v_s\right]dW_s,\zeta_s\right\rangle_{H},
	\end{align*}
	
	With similar arguments as before we find that $$\frac{1}{2}\E\sup_{s\in[0,t]}\norme{\zeta_s}_{H}^2+M_*\E\int_{0}^{t}\norme{\zeta_s}_{V_0}^2ds\le2\E\int_{0}^{t}(\alpha+L^2(1+2c_1^2))\norme{\zeta_s}_{H}^2ds$$ $$+\frac{1}{M_*}\E\int_{0}^{t}\norme{(\partial_x A(s,x^{u}_s,u_s)-\partial_x A(s,x^{u'}_s,u_s')) z^{u,v}_s+(\partial_u A(s,x^{u}_s,u_s)-\partial_u A(s,x^{u'}_s,u_s')) v_s}_{V_1}^2ds$$
	$$+2(1+2c_1^2)\varepsilon^2\E\int_{0}^{t}\norme{(\partial_x \sigma(s,x^{u}_s,u_s)-\partial_x \sigma(s,x^{u'}_s,u_s')) z^{u,v}_s+(\partial_u \sigma(s,x^{u}_s,u_s)-\partial_u \sigma(s,x^{u'}_s,u_s')) v_s}_{\LL_0}^2ds.$$
	
	The result follows once again from the Gr\"onwall lemma and the dominated convergence theorem.
\end{proof}

\subsubsection{Spike perturbation}

In order to establish \eqref{optCondPMP}, we will need to consider a diferent kind of variation for the control and trajectories. We now consider that the assumption \ref{hyp:PMP} holds.

Let $u,v\in \U^{ad}$ and $t_0\in [0,T)$ be given, we introduce a different notion of a perturbation of $u$ which is in some sense moved "toward" $v$ by a "step" of size $\varepsilon$. Instead of a linear perturbation, we consider for $\varepsilon\in(0,T-t_0]$ the so called spike or needle perturbation \begin{equation}\label{DefSpike}
	u_t^\varepsilon:=\begin{cases}
		v_t & t\in [t_0,t_0+\varepsilon),\\u_t & t\notin [t_0,t_0+\varepsilon).
	\end{cases}
\end{equation}

We introduce the following linearized equation \begin{equation}\label{LnrSEE:Spike}
	\begin{split}
		dz^{\varepsilon}_t&=\left[\partial_xA(t,x^u_t,u_t)z^{\varepsilon}_t+(A(t,x^u_t,u^\varepsilon_t)-A(t,x^u_t,u_t))\right]dt\\&\qquad+\partial_x \sigma(t,x^u_t,u_t)z^{\varepsilon}_tdW_t\\
		z^{\varepsilon}_0&=0.
	\end{split}
\end{equation}

We can then describe the variations of $x$ in terms of the variations of the control by the following proposition.

\begin{prop}\label{VarSpike} The following asymptotics for the variations of the state hold as $\varepsilon\to 0$.\begin{enumerate}
		\item $\norme{x^{u^\varepsilon}-x^{u}}_{\XX}=O(\varepsilon)$.
	
		\item $\norme{z^{\varepsilon}}_{\XX}=O(\varepsilon)$.
	
		\item $\frac{1}{\varepsilon}[x^{u^\varepsilon}-x^{u}-z^{\varepsilon}]\rightharpoonup 0$ in $\XX$.
	\end{enumerate}
	
\end{prop}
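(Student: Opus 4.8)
The plan is to obtain all three estimates by the same energy method used in Propositions~\ref{propWP} and~\ref{propWPLnr}, the essential new feature being that, under~\ref{hyp:PMP}, the control perturbation enters only the drift and only on the short interval $[t_0,t_0+\varepsilon)$, where $A(t,x^u_t,v_t)-A(t,x^u_t,u_t)$ is bounded in $H$ by the uniform constant $C$. For item~1 I set $y^\varepsilon:=x^{u^\varepsilon}-x^u$, apply the It\^o formula to $\norme{y^\varepsilon_t}_H^2$, and split the drift increment as $[A(t,x^{u^\varepsilon}_t,u^\varepsilon_t)-A(t,x^u_t,u^\varepsilon_t)]+[A(t,x^u_t,u^\varepsilon_t)-A(t,x^u_t,u_t)]$. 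The first bracket is controlled by the weak monotonicity~\eqref{AbsMonotone}, and the diffusion difference by the Lipschitz bound on $\sigma$ (which, since $\sigma$ is $u$-independent, produces no control contribution), exactly as before; the second bracket vanishes off $[t_0,t_0+\varepsilon)$.

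The only genuinely new term is therefore $2\,\E\int_{t_0}^{(t_0+\varepsilon)\wedge t}\langle A(s,x^u_s,v_s)-A(s,x^u_s,u_s),y^\varepsilon_s\rangle_H\,ds$, which by~\ref{hyp:PMP} is at most $2C\,\E\int_{t_0}^{(t_0+\varepsilon)\wedge t}\norme{y^\varepsilon_s}_H\,ds\le 2C\varepsilon\,\E\sup_{s\le t}\norme{y^\varepsilon_s}_H\le \tfrac14\E\sup_{s\le t}\norme{y^\varepsilon_s}_H^2+4C^2\varepsilon^2$ after Young's inequality. Absorbing the supremum on the left and applying Gr\"onwall yields $\E\sup_t\norme{y^\varepsilon_t}_H^2+M_*\E\int_0^T\norme{y^\varepsilon_t}_{V_0}^2\,dt=O(\varepsilon^2)$, i.e. item~1; item~2 follows identically, now using the coercivity~\eqref{AbsCoer} on $\langle\partial_xA\,z^\varepsilon,z^\varepsilon\rangle$ and treating the forcing $A(t,x^u_t,u^\varepsilon_t)-A(t,x^u_t,u_t)$ by the same short-interval estimate. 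It is precisely the $u$-independence of $\sigma$ in~\ref{hyp:PMP} that keeps us in the first-order regime and produces $O(\varepsilon^2)$ (hence $O(\varepsilon)$ in $\XX$) rather than the $O(\varepsilon)$ one would get from a needle perturbation of the diffusion.

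For item~3 I first derive a closed linear equation for $r^\varepsilon:=x^{u^\varepsilon}-x^u-z^\varepsilon$. Subtracting~\eqref{LnrSEE:Spike} from the equations for $x^{u^\varepsilon}$ and $x^u$, writing the increments $A(t,x^{u^\varepsilon}_t,u^\varepsilon_t)-A(t,x^u_t,u^\varepsilon_t)$ and $\sigma(t,x^{u^\varepsilon}_t)-\sigma(t,x^u_t)$ through averaged Jacobians $\tilde A^\varepsilon_{t,x}:=\int_0^1\partial_xA(t,x^u_t+\theta(x^{u^\varepsilon}_t-x^u_t),u^\varepsilon_t)\,d\theta$ and $\tilde\sigma^\varepsilon_{t,x}$ as in the proof of Proposition~\ref{propWPLnr}, and using $x^{u^\varepsilon}-x^u=r^\varepsilon+z^\varepsilon$, one obtains
\[
dr^\varepsilon_t=\big[\tilde A^\varepsilon_{t,x}r^\varepsilon_t+\hat g^\varepsilon_t\big]dt+\big[\tilde\sigma^\varepsilon_{t,x}r^\varepsilon_t+\hat G^\varepsilon_t\big]dW_t,\qquad r^\varepsilon_0=0,
\]
with $\hat g^\varepsilon_t=(\tilde A^\varepsilon_{t,x}-\partial_xA(t,x^u_t,u_t))z^\varepsilon_t$ and $\hat G^\varepsilon_t=(\tilde\sigma^\varepsilon_{t,x}-\partial_x\sigma(t,x^u_t))z^\varepsilon_t$. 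Since $\tilde A^\varepsilon_{t,x},\tilde\sigma^\varepsilon_{t,x}$ inherit the uniform bounds and the coercivity of $\partial_xA,\partial_x\sigma$, the energy estimate of Proposition~\ref{propWPLnr} gives $\norme{r^\varepsilon}_\XX\lesssim \norme{\hat g^\varepsilon}_{L^2_\F(\Omega;L^2(0,T;V_1))}+\norme{\hat G^\varepsilon}_{L^2_\F(\Omega;L^2(0,T;\LL_0))}\lesssim\norme{z^\varepsilon}_\XX=O(\varepsilon)$ by item~2. Hence $\varepsilon^{-1}r^\varepsilon$ is bounded in $\XX$, and in particular in the Hilbert space $L^2_\F(\Omega;L^2(0,T;V_0))$, so along a subsequence $\varepsilon_k^{-1}r^{\varepsilon_k}\rightharpoonup\rho$.

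It remains to identify $\rho=0$. The crux is that $\tilde A^\varepsilon_{t,x}\to\partial_xA(t,x^u_t,u_t)$ and $\tilde\sigma^\varepsilon_{t,x}\to\partial_x\sigma(t,x^u_t)$ only \emph{strongly} (via the strong continuity of the adjoints assumed in~\ref{hyp:PMP}, together with item~1 and $u^\varepsilon\to u$ off the shrinking interval), not in operator norm; this is exactly why $\varepsilon^{-1}r^\varepsilon$ cannot be expected to converge strongly and one must settle for weak convergence. First, $\varepsilon^{-1}\hat g^\varepsilon\rightharpoonup0$: for fixed $\phi$ one writes $\E\int\langle(\tilde A^\varepsilon_{t,x}-\partial_xA)\,\varepsilon^{-1}z^\varepsilon_t,\phi_t\rangle_{V_1,V_1'}\,dt=\E\int\langle \varepsilon^{-1}z^\varepsilon_t,(\tilde A^\varepsilon_{t,x}-\partial_xA)^*\phi_t\rangle_{V_0,V_0'}\,dt$, where $\varepsilon^{-1}z^\varepsilon$ is bounded in $L^2$ while $(\tilde A^\varepsilon_{t,x}-\partial_xA)^*\phi\to0$ strongly in $L^2$ (pointwise by strong continuity of the adjoint, dominated by $2B\norme{\phi}$, and vanishing on $[t_0,t_0+\varepsilon)$ as that set shrinks), so the pairing tends to $0$; similarly $\varepsilon^{-1}\hat G^\varepsilon\rightharpoonup0$. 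Passing to the limit in the variational form of the equation for $\varepsilon^{-1}r^\varepsilon$ — using the same weak--strong pairing $\langle\tilde A^\varepsilon_{t,x}\,\varepsilon^{-1}r^\varepsilon,\phi\rangle=\langle \varepsilon^{-1}r^\varepsilon,(\tilde A^\varepsilon_{t,x})^*\phi\rangle\to\langle\rho,(\partial_xA)^*\phi\rangle$ for the coefficient terms, and the weak continuity of the It\^o integral for the stochastic term — shows that $\rho$ solves the homogeneous version of~\eqref{LnrSEE:Spike} with zero data, whence $\rho=0$ by the uniqueness part of Proposition~\ref{propWPLnr}. As the weak limit is independent of the subsequence, the whole family satisfies $\varepsilon^{-1}r^\varepsilon\rightharpoonup0$ in $\XX$. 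The main obstacle is this final limit passage: weak convergence forces one to test every coefficient against fixed elements, to lean on the strong continuity of the adjoints in~\ref{hyp:PMP}, and to justify the weak convergence of the stochastic integral.
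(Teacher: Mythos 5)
Your treatment of items 1 and 2 follows the same energy method as the paper (split the drift increment, use the monotonicity/coercivity \eqref{AbsMonotone}--\eqref{AbsCoer}, bound the needle term via the uniform $H$-bound of \ref{hyp:PMP} on the shrinking interval, then BDG and Gr\"onwall), and your setup for item 3 --- the equation for $r^\varepsilon$ with averaged coefficients $\tilde A^\varepsilon_{t,x},\tilde\sigma^\varepsilon_{t,x}$ and forcing $(\tilde A^\varepsilon_{t,x}-\partial_xA)z^\varepsilon$, $(\tilde\sigma^\varepsilon_{t,x}-\partial_x\sigma)z^\varepsilon$, the bound $\norme{r^\varepsilon}_{\XX}\lesssim\norme{z^\varepsilon}_{\XX}=O(\varepsilon)$, and the duality argument resting on the strong continuity of the adjoints $(\partial_xA)^*,(\partial_x\sigma)^*$ --- is also the paper's. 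You diverge at the endgame: the paper rewrites the equation with \emph{frozen} coefficients $\partial_xA(t,x^u_t,u_t)$, $\partial_x\sigma(t,x^u_t,u_t)$, pushing $[\tilde A^\varepsilon_{t,x}-\partial_xA](r^\varepsilon+z^\varepsilon)$ and its diffusion analogue into the data, so that $\varepsilon^{-1}r^\varepsilon$ is the image of that data under one fixed bounded linear solution operator $S:L_{\F}^2(\Omega\times[0,T];V_1\times\LL_0)\to\XX$; weak convergence of the data to $0$ then yields $\varepsilon^{-1}r^\varepsilon\rightharpoonup0$ in $\XX$ simply because bounded linear operators are weak-to-weak continuous. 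You instead keep the moving coefficients and argue by weak compactness, limit identification, and uniqueness.

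That is where the gap lies. Your compactness step extracts weakly convergent subsequences in the Hilbert space $L_{\F}^2(\Omega;L^2(0,T;V_0))$, so everything downstream (identification of $\rho$, uniqueness, the subsequence principle) delivers $\varepsilon^{-1}r^\varepsilon\rightharpoonup0$ only in that space. The proposition, however, asserts weak convergence in $\XX=L_{\F}^2(\Omega;C([0,T],H))\cap L_{\F}^2(\Omega;L^2(0,T;V_0))$, and the component $L_{\F}^2(\Omega;C([0,T],H))$ is not reflexive: boundedness there gives no weakly convergent subsequences, so no variant of your extraction argument can reach functionals of the form $y\mapsto\E\langle y_T,h\rangle_{H,H'}$. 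This is not cosmetic: precisely such terminal-time terms, $\frac{1}{\varepsilon}\E\langle r^\varepsilon_T,h_x^\varepsilon\rangle_{H,H'}$, must vanish in the proof of Theorem \ref{th:main}, which is the entire purpose of item 3. The repair is short and already contained in what you proved: your duality argument gives not only $\varepsilon^{-1}(\tilde A^\varepsilon_{t,x}-\partial_xA)z^\varepsilon\rightharpoonup0$ but equally $\varepsilon^{-1}(\tilde A^\varepsilon_{t,x}-\partial_xA)(r^\varepsilon+z^\varepsilon)\rightharpoonup0$ (and likewise for $\sigma$), since $\varepsilon^{-1}(r^\varepsilon+z^\varepsilon)$ is bounded by items 1--2; feeding this into the frozen-coefficient form of the equation and invoking the weak-to-weak continuity of $S$ gives convergence in all of $\XX$ at once, with no subsequences and no uniqueness argument needed.
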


\begin{proof}
	We denote $$A_{t,x}^\varepsilon z=\int_0^1\partial_x A(t,(1-\theta)x_t^u+\theta x_t^{u^\varepsilon},u_t^\varepsilon)zd\theta\quad z\in V_0,$$ $$\sigma_{t,x}^\varepsilon z=\int_0^1\partial_x \sigma(t,(1-\theta)x_t^u+\theta x_t^{u^\varepsilon})zd\theta\quad z\in V_0,$$
	\begin{enumerate}
		\item We remark that \begin{equation}
			\begin{split}
				d(x^{u^\varepsilon}_t-x^{u}_t)=&\left[A(t,x^{u^\varepsilon}_t,u^{\varepsilon}_t)-A(t,x^u_t,u_t)\right]dt+\left[\sigma(t,x^{u^\varepsilon}_t)-\sigma(t,x^u_t)\right]dW_t\\
				x^{u^\varepsilon}_0-x^{u}_0=&0.
			\end{split}
		\end{equation}
		Since \begin{equation*}
			\begin{split}
				A(t,x^{u^\varepsilon}_t,u^{\varepsilon}_t)-A(t,x^u_t,u_t)=&A(t,x^{u^\varepsilon}_t,u^{\varepsilon}_t)-A(t,x^u_t,u^{\varepsilon}_t)+A(t,x^u_t,u^{\varepsilon}_t)-A(t,x^u_t,u_t)\\
				=&A_{t,x}^\varepsilon(x^{u^\varepsilon}_t-x^{u}_t)+A(t,x^u_t,u^{\varepsilon}_t)-A(t,x^u_t,u_t)
			\end{split}
		\end{equation*} and \begin{equation*}
		\sigma(t,x^{u^\varepsilon}_t)-\sigma(t,x^u_t)=\sigma_{t,x}^\varepsilon(x^{u^\varepsilon}_t-x^{u}_t),
	\end{equation*} we may show through the It\^o formula, the BDG inequality and the Young inequality that \begin{equation*}
		\norme{x^{u^\varepsilon}-x^{u}}_{\XX}^2\le C\E\left( \int_{0}^{T}\norme{A(t,x^u_t,u^{\varepsilon}_t)-A(t,x^u_t,u_t)}_{H}dt\right)^2=O(\varepsilon^2).
	\end{equation*}
	\item Proof is similar to the previous case.
	
	\item We define $r^\varepsilon=x^{u^\varepsilon}-x^{u}-z^{\varepsilon}$, it then holds that \begin{equation*}
		\begin{split}
			dr^\varepsilon_t=&\left[A_{t,x}^\varepsilon r^\varepsilon_t +[A_{t,x}^\varepsilon-\partial_xA(t,x^u_t,u_t)]z^\varepsilon_t\right]dt+\left[\sigma_{t,x}^\varepsilon r^\varepsilon_t+[\sigma_{t,x}^\varepsilon-\partial_x\sigma(t,x^u_t,u_t)]z^\varepsilon_t\right]dW_t\\
			r^\varepsilon_0=&0.
		\end{split}
	\end{equation*}

	It follows that $\norme{r^\varepsilon}_{\XX}\le C\norme{z^\varepsilon}_{\XX}=O(\varepsilon)$, now we notice that \begin{equation}\label{LinearVar}
		\begin{split}
			dr^\varepsilon_t=&\left[\partial_xA(t,x^u_t,u_t) r^\varepsilon_t +[A_{t,x}^\varepsilon-\partial_xA(t,x^u_t,u_t)](r^\varepsilon_r+z^\varepsilon_t)\right]dt\\&\qquad+\left[\partial_x\sigma(t,x^u_t,u_t) r^\varepsilon_t+[\sigma_{t,x}^\varepsilon-\partial_x\sigma(t,x^u_t,u_t)](r^\varepsilon_t +z^\varepsilon_t)\right]dW_t\\
			r^\varepsilon_0=&0.
		\end{split}
	\end{equation}

	By the linearity of \eqref{LinearVar}, if \begin{equation}\label{CondConverge}
		\frac{1}{\varepsilon}\left([A_{t,x}^\varepsilon-\partial_xA(t,x^u_t,u_t)](r^\varepsilon_r+z^\varepsilon_t),[\sigma_{t,x}^\varepsilon-\partial_x\sigma(t,x^u_t,u_t)](r^\varepsilon_r+z^\varepsilon_t)\right)\rightharpoonup0
	\end{equation} in $L_{\F}^2(\Omega\times[0,T];V_1\times\LL_0)$, then $\frac{1}{\varepsilon}r^\varepsilon\rightharpoonup0$ in $\XX$ and the result follows. Consider $(\phi,\psi)\in L_{\F}^2(\Omega\times[0,T];{V_1}'\times{\LL_0}')$, then \begin{equation*}
	\frac{1}{\varepsilon}\left\langle [A_{t,x}^\varepsilon-\partial_xA(t,x^u_t,u_t)](r^\varepsilon_r+z^\varepsilon_t),\phi\right\rangle_{V_1,{V_1}'}=\left\langle \frac{r^\varepsilon_r+z^\varepsilon_t}{\varepsilon},[A_{t,x}^\varepsilon-\partial_xA(t,x^u_t,u_t)]^*\phi\right\rangle_{V_0,{V_0}'}
\end{equation*} and \begin{equation*}
\frac{1}{\varepsilon}\left\langle [\sigma_{t,x}^\varepsilon-\partial_x\sigma(t,x^u_t,u_t)](r^\varepsilon_r+z^\varepsilon_t),\psi\right\rangle_{\LL_0,{\LL_0}'}=\left\langle \frac{r^\varepsilon_r+z^\varepsilon_t}{\varepsilon},[\sigma_{t,x}^\varepsilon-\partial_x\sigma(t,x^u_t,u_t)]^*\psi\right\rangle_{H,H'},
\end{equation*} so by the strong continuity of $\partial_x A^*$ and $\partial_x\sigma^*$ and the dominated convergence theorem, \eqref{CondConverge} holds and therefore $\frac{1}{\varepsilon}r^\varepsilon\rightharpoonup0$ in $\XX$ and the result holds.
	\end{enumerate}
\end{proof}

\subsection{Adjoint equation}

Throughout this section we suppose that $x\in\XX$ and $u\in\U$ solving \eqref{SEESPM} are given.

We remark that ${V_1}'\subset H'\subset {V_0}'$ is a Gelfand triple isometric to $V_0\subset H\subset V_1$. For the adjoint equation we consider the former.


To simplify notation, we write $\partial_xA(t,x_t,u_t)$ (resp. $\partial_u A(t,x_t,u_t)$, $\partial_x\sigma(t,x_t,u_t)$, $\partial_u\sigma(t,x_t,u_t)$, $\partial_x f(t,x_t,u_t)$, $\partial_u f(t,x_t,u_t)$) simply as $\partial_xA(t)$ (resp. $\partial_u A(t)$, $\partial_x\sigma(t)$, $\partial_u\sigma(t)$, $\partial_x f(t)$, $\partial_u f(t)$). 

Consider the backward stochastic evolution equation \begin{equation}\label{BSEESPM}
	\begin{split}
		dp_t&=-[(\partial_xA(t))^* p_t+(\partial_x\sigma(t))^*q_t-\partial_x f(t)]dt+q_tdW_t\\
		p_T&=-\partial_x h(x_T)\in H',
	\end{split}
\end{equation} or equivalently, for all $0\le t\le T$, \begin{equation}\label{eqDefSolAdj}
	p_t+\partial_x h(x_T)+\int_{t}^{T}\partial_xf_sds=\int_{t}^{T}\left[(\partial_xA(s))^* p_s+(\partial_x\sigma(s))^*q_s\right]ds
	-\int_{t}^{T}q_sdW_s.\end{equation}

Notice that since $\partial_x\sigma(s)\in\LL(H,\LL_0)$, we have 
$$(\partial_x\sigma(s))^*\in \LL({\LL_0}',H')=\LL(\LL_2(K,H'),H').$$

We search for a solution $(p,q)\in \PP\times \QQ$.
Analogously to Definition \ref{DefSol} we define a solution as follows \begin{defi}\label{DefSolAdj}
	We say that $(p,q)$ is a solution of \eqref{BSEESPM} if $p\in \PP$, $q\in \QQ$ and $(p,q)$ satisfies \eqref{eqDefSolAdj} in ${V_0}'$.
\end{defi}

\begin{lemma}\label{propDualityRelation}
	Equation \eqref{BSEESPM} admits a unique solution $(p,q)\in \PP\times\QQ$. Moreover, the following duality relation holds:
	
	\begin{align*}
		\langle z_T^{u,v},p_T\rangle_{H,H'}=&\int_{0}^{T}\left[\langle z_t^{u,v},\partial_x f(t)\rangle_{V_0,{V_0}'}+\langle \partial_u A(t) v_t,p_t\rangle_{V_1,{V_1}'}+\langle \partial_u \sigma(t) v_t,q_t\rangle_{\LL_0,{\LL_0}'}\right]dt\\&\qquad+\int_{0}^{T}\left[\langle z^{u,v}_t,qdW_t\rangle_{H,H'}+\langle \left[\partial_x\sigma(t)z_t^{u,v}+\partial_u\sigma(t)v_t\right]dW_t,p_t\rangle_{H,H'}\right],
	\end{align*} with which \begin{equation}\label{dualityRelation}\begin{split}
		0=&\E\int_{0}^{T}\left[\langle z_t^{u,v},\partial_x f(t)\rangle_{V_0,{V_0}'}+\langle \partial_u A(t) v_t,p_t\rangle_{V_1,{V_1}'}+\langle \partial_u \sigma(t) v_t,q_t\rangle_{\LL_0,{\LL_0}'}\right]dt\\&\qquad+\E\langle z_T^{u,v},\partial_x h(x_T)\rangle_{H,H'}.\end{split}
	\end{equation}
\end{lemma}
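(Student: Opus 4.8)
The plan is to establish the two claims in turn: first the well-posedness of the backward equation \eqref{BSEESPM}, and then the duality identity by an Itô product computation. For \emph{existence and uniqueness}, I would use the remark that ${V_1}'\subset H'\subset {V_0}'$ is a Gelfand triple isometric to $V_0\subset H\subset V_1$, so that \eqref{BSEESPM} is a linear backward stochastic evolution equation posed in this dual triple. The operator $(\partial_x A(t))^*\in\LL({V_1}',{V_0}')$ inherits the coercivity \eqref{AbsCoer} with the same constants $M_*,\alpha$ (its symmetric part coincides with that of $\partial_x A(t)$, and the two triples are isometric), while $(\partial_x A(t))^*$ and $(\partial_x\sigma(t))^*$ are uniformly bounded thanks to \eqref{AbsBoundDrift}--\eqref{AbsBoundDiff}. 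Hypotheses \ref{hyp:3:Cost}--\ref{hyp:4:Cost} guarantee that the source satisfies $\partial_x f\in L_{\F}^2(\Omega;L^2(0,T;{V_0}'))$ and that the terminal datum $\partial_x h(x_T)\in L^2_{\FF_T}(\Omega;H')$. With these ingredients, existence and uniqueness of $(p,q)\in\PP\times\QQ$ follow from the standard well-posedness theory for coercive linear backward stochastic evolution equations; concretely I would run a Galerkin approximation, extract the a priori energy estimate from the coercivity, and recover the martingale integrand $q$ via the martingale representation theorem. It is the backward (non-adapted terminal) structure, rather than the coercivity, that distinguishes this from Proposition \ref{propWP}.

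For the \emph{duality relation}, I would apply the Itô product formula to $t\mapsto\langle z_t^{u,v},p_t\rangle_{H,H'}$, where $z^{u,v}$ solves the forward linearized equation \eqref{LnrSEESPM} and $(p,q)$ solves \eqref{BSEESPM}. Decomposing $d\langle z_t^{u,v},p_t\rangle_{H,H'}=\langle dz_t^{u,v},p_t\rangle+\langle z_t^{u,v},dp_t\rangle+d[z^{u,v},p]_t$, the cross-variation of the two martingale parts equals $\langle\partial_x\sigma(t)z_t^{u,v}+\partial_u\sigma(t)v_t,q_t\rangle_{\LL_0,{\LL_0}'}\dt$. The definition of the adjoint then produces two cancellations: the term $\langle\partial_x A(t)z_t^{u,v},p_t\rangle_{V_1,{V_1}'}$ coming from $\langle dz^{u,v},p\rangle$ cancels $-\langle z_t^{u,v},(\partial_x A(t))^*p_t\rangle_{V_0,{V_0}'}$ coming from $\langle z^{u,v},dp\rangle$, and $-\langle z_t^{u,v},(\partial_x\sigma(t))^*q_t\rangle_{H,H'}=-\langle\partial_x\sigma(t)z_t^{u,v},q_t\rangle_{\LL_0,{\LL_0}'}$ cancels the matching part of the cross-variation. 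The surviving drift is precisely $\langle z_t^{u,v},\partial_x f(t)\rangle_{V_0,{V_0}'}+\langle\partial_u A(t)v_t,p_t\rangle_{V_1,{V_1}'}+\langle\partial_u\sigma(t)v_t,q_t\rangle_{\LL_0,{\LL_0}'}$. Integrating over $[0,T]$ and using $z_0^{u,v}=0$ yields the first displayed identity; taking expectations annihilates the two stochastic integrals (genuine martingales because $z^{u,v}\in\XX$, $p\in\PP$ and $q\in\QQ$), and substituting $p_T=-\partial_x h(x_T)$ gives \eqref{dualityRelation}.

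The \emph{main obstacle} is the rigorous justification of this product formula. The Itô formula of \cite[Theorem 4.2.5]{prevot2007concise} is stated for a single variational solution, whereas here I pair a forward solution taking values in $V_0\subset H\subset V_1$ against a backward solution taking values in ${V_1}'\subset H'\subset {V_0}'$, and the pairing $\langle z_t^{u,v},p_t\rangle_{H,H'}$ is only meaningful because $z_t^{u,v}\in V_0$ and $p_t\in{V_1}'$ for a.e.\ $t$. I would make this precise by running the Galerkin scheme from the existence step on both equations simultaneously, writing the classical finite-dimensional product rule for the projected processes, and passing to the limit using the a priori bounds of Proposition \ref{propWPLnr} together with the energy estimate for $(p,q)$. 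The uniform coefficient bounds $B$ and $L$ then supply the integrability required both for this passage to the limit and for discarding the martingale terms under the expectation.
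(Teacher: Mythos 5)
Your proposal is correct, and the core duality computation (the Itô product rule with the two cancellations coming from the definition of the adjoints, the cross-variation term, $z_0^{u,v}=0$, the martingale property of the stochastic integrals under the stated integrability, and the substitution $p_T=-\partial_x h(x_T)$) is exactly what the paper's proof consists of. Where you diverge is in the justifications, and in both places you do substantially more work than the paper. For existence and uniqueness, the paper simply cites \cite[Theorem 4.1]{hu1991adapted} (Hu--Peng's well-posedness theorem for backward stochastic evolution equations), whereas you propose to re-derive that result from scratch via a Galerkin scheme, the coercivity of $(\partial_x A(t))^*$ in the dual triple, and the martingale representation theorem; your scheme is the standard proof of the cited theorem and is legitimate here (the filtration is generated by $W$, so representation applies, and your checks that $(\partial_x A)^*$ inherits \eqref{AbsCoer} with the same constants, that $\partial_x f\in L^2_{\F}(\Omega;L^2(0,T;{V_0}'))$, and that $\partial_x h(x_T)\in L^2_{\FF_T}(\Omega;H')$ are precisely what makes the citation applicable), but it is a re-proof of a known result rather than a new argument. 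For the product formula, the paper's route is the polarization identity: since ${V_1}'\subset H'\subset {V_0}'$ is isometric to $V_0\subset H\subset V_1$, one identifies $p$ with a variational process in the \emph{same} triple as $z^{u,v}$, applies the scalar Itô formula of \cite[Theorem 4.2.5]{prevot2007concise} to $\|z^{u,v}_t\pm p_t\|_H^2$, and polarizes to get $\langle z^{u,v}_t,p_t\rangle_{H,H'}$; this avoids the simultaneous double Galerkin limit you propose, which would also work but duplicates effort already encapsulated in the cited Itô formula. In short: your argument buys self-containedness at the cost of length; the paper buys brevity by leaning on Hu--Peng for existence and on the isometry-plus-polarization trick for the pairing.
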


\begin{proof}
	Existence and uniqueness is a consequence of \cite[Theorem 4.1]{hu1991adapted}. The duality relation is simply the It\^o formula (see for instance \cite[Theorem 4.2.5]{prevot2007concise}) and the polarization identity.
\end{proof}

\section{Optimality conditions}
	
Throughout this section, we assume Hypotheses \ref{hyp:dyn:1}-\ref{hyp:4:Cost} hold.
	
	\if{We denote $F^x_t$, $F^u_t$ the stochastic processes defined by $$\langle z,F^x_t\rangle_{V_0,{V_0}'}=\int_{\OO}z(\xi)\partial_xf(x^u_t(\xi),u^{\OO}_t(\xi),\xi,t)d\xi$$ and $$\langle v,F^u_t\rangle_{U,U'}=\int_{\OO}v^{\OO}(\xi)\partial_uf(x^u_t(\xi),u^{\OO}_t(\xi),\xi,t)d\xi+\int_{\partial\OO}v^{\partial}(S)\partial_uh(u^{\OO}_t(S),S,t)dS,$$ where $v=(v^{\OO},v^{\partial})$.
		
	It follows from Hypotheses \ref{hyp:1} that $$\partial_x F\in L_{\F}^2(\Omega;L^2(0,T;{V_0}'))\quad\text{ and }\quad \partial_u F\in L_{\F}^2(\Omega;L^2(0,T;U'))\equiv \U'.$$
	
	It follows from Hypotheses \ref{hyp:1} that \begin{equation}
		\langle z_t,\partial_x F(t)\rangle_{V_0,{V_0}'}+\langle v_t,\partial_u F(t)\rangle_{U,U'}\in L_{\F}^1(\Omega;L^1(0,T))\quad\text{ and }\quad\langle z_T,\partial_x h(x_T)\rangle_{H,H'}\in L_{\FF_T}^1(\Omega).
	\end{equation}}\fi
	
\begin{prop}\label{propDifcost}
	The cost $\JJ$ is $C^1$ w.r.t. $x$ and $u$ and its derivative is given by \begin{equation}\label{DifCost}
		\langle \partial\JJ(x,u),(z,v)\rangle_{\XX'\times\U',\XX\times\U}=\E\int_{0}^{T}\left[\langle z_t,\partial_x f(t)\rangle_{V_0,{V_0}'}+\langle v_t,\partial_u f(t)\rangle_{U,U'}\right]dt+\E \langle z_T,\partial_x h(x_T)\rangle_{H,H'}
	\end{equation}
\end{prop}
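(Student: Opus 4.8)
The plan is to establish that $\JJ$ is $C^1$ by expressing it as a composition of smooth maps and applying the chain rule, then identifying the derivative explicitly. Recall that $\JJ(x,u)=\E\int_0^T f(t,x_t,u_t)\,dt+\E\,h(x_T)$, and we already know from Proposition \ref{propWPLnr} that the control-to-state map $u\mapsto x^u$ is $C^1$ with derivative $z^{u,v}$. However, the statement here asks for differentiability of $\JJ$ as a function of the \emph{pair} $(x,u)\in\XX\times\U$ jointly (not yet composed with the state map), so the task reduces to differentiating the two integral functionals $(x,u)\mapsto \E\int_0^T f(t,x_t,u_t)\,dt$ and $x\mapsto \E\,h(x_T)$.

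First I would treat the running cost term. Fix $(x,u)$ and a direction $(z,v)\in\XX\times\U$, and consider the difference quotient $\frac1\eps\bigl[f(t,x_t+\eps z_t,u_t+\eps v_t)-f(t,x_t,u_t)\bigr]$. Using the $C^1$ regularity of $f$ from Hypothesis \ref{hyp:3:Cost} and the mean value theorem, this converges pointwise (a.e. $(\omega,t)$) to $\langle z_t,\partial_x f(t)\rangle_{V_0,{V_0}'}+\langle v_t,\partial_u f(t)\rangle_{U,U'}$. The key analytic step is to justify passing the limit through $\E\int_0^T$, for which I would invoke the dominated convergence theorem: the difference quotient is bounded, via the integral form of the mean value theorem, by $\sup_{\theta\in[0,1]}\norme{\partial_{x,u}f(t,x_t+\theta\eps z_t,u_t+\theta\eps v_t)}_{{V_0}'\times U'}\cdot(\norme{z_t}_{V_0}+\norme{v_t}_U)$, and the growth bound in \ref{hyp:3:Cost}, namely $\norme{\partial_{x,u}f}\le M_f+C\norme{x}_{V_0}+C\norme{u}_U$, together with $x\in\XX$, $u\in\U$, $z\in\XX$, $v\in\U$ and $M_f\in L^2_\F$, produces an $L^1(\Omega\times[0,T])$ dominating function (after Cauchy--Schwarz on the products). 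The terminal term $x\mapsto\E\,h(x_T)$ is handled identically using Hypothesis \ref{hyp:4:Cost}, giving the directional derivative $\E\langle z_T,\partial_x h(x_T)\rangle_{H,H'}$.

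Having shown Gateaux differentiability with the derivative given by \eqref{DifCost}, it remains to upgrade to $C^1$, i.e. to show the Gateaux derivative $(x,u)\mapsto\partial\JJ(x,u)$ is continuous from $\XX\times\U$ into $(\XX\times\U)'$. I would take a sequence $(x^n,u^n)\to(x,u)$ in $\XX\times\U$, and estimate $\norme{\partial\JJ(x^n,u^n)-\partial\JJ(x,u)}$ by testing against $(z,v)$ and bounding using Cauchy--Schwarz; the convergence then follows from the strong continuity of $\partial_x f,\partial_u f,\partial_x h$ guaranteed by the $C^1$ assumptions, again combined with dominated convergence (passing to an a.e.-convergent subsequence if needed).

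The main obstacle is the integrability/domination bookkeeping: the growth conditions on the derivatives are merely linear in $\norme{x}_{V_0}$ and $\norme{u}_U$, so products such as $\norme{x}_{V_0}\norme{z}_{V_0}$ appear, and one must verify these lie in $L^1(\Omega\times[0,T])$ using that both factors are in $L^2_\F(\Omega;L^2(0,T;V_0))$ (which is exactly the $\XX$-norm control) and that $M_f\in L^2$. The Cauchy--Schwarz inequality in $\Omega\times[0,T]$ closes these estimates, but care is needed that the dominating function is uniform in $\eps$ over a bounded range, which is why the mean value theorem is applied in integral form over $\theta\in[0,1]$ rather than at a single intermediate point.
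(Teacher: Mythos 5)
Your argument is correct and follows essentially the same route as the paper: reduce to a.e.-pointwise convergence of the integrands, then conclude by dominated convergence using the linear growth bounds of \ref{hyp:3:Cost}--\ref{hyp:4:Cost} together with Cauchy--Schwarz in $\Omega\times[0,T]$. The only cosmetic difference is that the paper extracts a.e.-convergent subsequences from the $L^2$ convergences $x+\varepsilon_k z\to x$, $u+\varepsilon_k v\to u$, whereas you observe (correctly) that for a linear perturbation the pointwise convergence of the difference quotients is automatic from the Gateaux differentiability of $f(t,\cdot,\cdot)$ at a.e. $(\omega,t)$; your integral-form mean value bound, uniform in $\varepsilon\in(0,1]$, is exactly the domination the paper invokes implicitly.

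One caveat on your final step. The paper's convention is that $C^1$ means the Gateaux derivative is \emph{strongly} continuous, i.e.\ $(x,u)\mapsto\langle\partial\JJ(x,u),(z,v)\rangle$ is continuous for each fixed $(z,v)$ --- equivalently, continuity into $(\XX\times\U)'$ with its weak-$*$ topology. That is precisely what your ``test against a fixed $(z,v)$'' argument delivers, and it is all that is claimed. What you should not aim for is the dual-norm estimate $\norme{\partial\JJ(x^n,u^n)-\partial\JJ(x,u)}_{(\XX\times\U)'}\to 0$ that your wording suggests: after Cauchy--Schwarz this would require $\partial_{x}f(t,x^n_t,u^n_t)\to\partial_x f(t,x_t,u_t)$ in $L^2(\Omega\times[0,T];{V_0}')$, and Hypothesis \ref{hyp:3:Cost} only provides strong (weak-$*$) continuity of $\partial_{x,u}f$, not norm continuity (the latter is assumed only in \ref{hyp:PMP}, for the maximum principle). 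Finally, in the continuity step your dominating functions depend on $n$ through $\norme{x^n_t}_{V_0}$ and $\norme{u^n_t}_U$, so strictly speaking you need the generalized dominated convergence theorem (dominating sequence convergent in $L^1$ along the chosen subsequence); this is the ``classical variation of the dominated convergence theorem'' the paper alludes to.
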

	
\begin{proof} Let $(x,u),(z,v)\in \XX\times\U$. To establish \eqref{DifCost}t, it is sufficient to show that every sequence $\{\varepsilon_k\}$ converging to $0$ has a subsequence for which $$\frac{1}{\varepsilon}\left[\JJ(x+\varepsilon z,u+\varepsilon v)-\JJ(x,u)\right]$$ converges to $$\E\int_{0}^{T}\left[\langle z_t,\partial_x f(t)\rangle_{V_0,{V_0}'}+\langle v_t,\partial_u f(t)\rangle_{U,U'}\right]dt\notag+\E \langle z_T,\partial_x h(x_T)\rangle_{H,H'}.$$ 
		
	\if{Since $x+\varepsilon_k z\to x$ and $u^{\OO}+\varepsilon_k v^{\OO}\to u^{\OO}$ in $L^2(\Omega\times[0,T]\times \OO)$ and similarly $u^{\partial}+\varepsilon_k z^{\partial}\to u^{\partial}$ in $L^2(\Omega\times[0,T]\times \partial\OO)$, there is a subsequence for which convergence holds a.e., due to the bounds of Hypothesis \ref{hyp:1}, the dominated convergence theorem implies the result.}\fi
		
We have $x+\varepsilon_k z\to x$ in $L_{\F}^2(\Omega\times[0,T];V_0)$, $x_T+\varepsilon_k z_T\to x_T$ in $L_{\FF_T}^2(\Omega;H)$ and $u+\varepsilon_k v\to u$ in $L_{\F}^2(\Omega\times[0,T];U)$, so there is a subsequence for which convergence holds a.e., due to the bounds of Hypothesis \ref{hyp:3:Cost}, the dominated convergence theorem implies the differenciability.
		
The strong continuity of $\partial\JJ(x,u)$ follows from the strong continuity of the derivatives $\partial_{x,u}f$, $\partial_x h$ and a classical variation of the dominated convergence theorem.
		
\end{proof}
	
By Proposition \ref{propWP}, $x\in\XX$ is uniquely determined by $u\in\U$ through \eqref{SEESPM}, we denote such solution by $x^u$. It follows that problem \eqref{ProbPSPM} is equivalent to \begin{equation}\label{ProbU}
	\min_{u\in\U^{ad}} \JJ(x^u,u)\tag{$P$}.
\end{equation}
	
It will be useful to revover from \eqref{DifCost} the derivative of $\JJ(x^u,u)$ w.r.t. $u$. Because of Propositions \ref{propWPLnr} and \ref{propDifcost}, $\JJ$ and the solution mapping $U\ni u\mapsto x^u\in\XX$ are both $C^1$, thus the chain rule holds:
	
\begin{prop}
	The function $\U\ni u\mapsto \JJ(x^u,u)$ is $C^1$ and its derivative is given by: \begin{equation}\label{difControlCost}\begin{split}
			\langle \partial_u[u\mapsto\JJ(x^u,u)],v\rangle_{\U',\U}=&\E\displaystyle\int_{0}^{T}\left[\langle z^{u,v}_t,\partial_x f(t)\rangle_{V_0,{V_0}'}+\langle v_t,\partial_u f(t)\rangle_{U,U'}\right]dt\\&\qquad+\E \langle z^{u,v}_T,\partial_x h(x_T)\rangle_{H,H'}.
		\end{split}
	\end{equation}
\end{prop}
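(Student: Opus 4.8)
The plan is to establish the $C^1$ property and the derivative formula by composing the two maps whose regularity has already been secured: the solution map $u \mapsto x^u$ and the cost functional $\JJ$. By Proposition \ref{propWPLnr}, the map $\U \ni u \mapsto x^u \in \XX$ is $C^1$ with derivative $\partial_u[u\mapsto x^u](u).v = z^{u,v}$, where $z^{u,v}$ solves the linearized equation \eqref{LnrSEESPM}; by Proposition \ref{propDifcost}, the functional $\JJ$ is $C^1$ on $\XX\times\U$ with derivative given by \eqref{DifCost}. Since the composition $u \mapsto \JJ(x^u,u)$ is of the form $\JJ \circ \Phi$ where $\Phi(u) = (x^u, u)$, and $\Phi$ is $C^1$ (its first component is $C^1$ by Proposition \ref{propWPLnr} and its second component is the identity), the chain rule for Fr\'echet/Gateaux derivatives of $C^1$ maps between Banach spaces applies directly.

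The key computation is then just an application of the chain rule. We have $\partial_u \Phi(u).v = (z^{u,v}, v)$, so that
\begin{equation*}
\langle \partial_u[u\mapsto\JJ(x^u,u)],v\rangle_{\U',\U} = \langle \partial\JJ(x^u,u),(z^{u,v},v)\rangle_{\XX'\times\U',\XX\times\U}.
\end{equation*}
Substituting the expression \eqref{DifCost} for $\partial\JJ(x^u,u)$ evaluated at the pair $(z^{u,v},v)$ yields precisely \eqref{difControlCost}, since the $z$-slot of \eqref{DifCost} receives $z^{u,v}$ and the $v$-slot receives $v$.

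The only point requiring care is the justification that composition preserves the $C^1$ property in this infinite-dimensional setting and that the chain rule holds in the form used. Since both factors are genuinely $C^1$ (with strongly continuous Gateaux derivatives, as established in Propositions \ref{propWPLnr} and \ref{propDifcost}), and the derivative of the composition is the composition of the derivatives, this is a standard consequence of the differentiation theory for maps between Banach spaces. I do not anticipate any genuine obstacle here: the substance of the argument has already been carried out in the two preceding propositions, and the present statement is essentially their corollary via the chain rule. If one wished to be fully self-contained, one would verify the chain rule by writing $\JJ(x^{u+\varepsilon v},u+\varepsilon v) - \JJ(x^u,u)$, inserting the first-order expansion $x^{u+\varepsilon v} = x^u + \varepsilon z^{u,v} + o(\varepsilon)$ in $\XX$ from Proposition \ref{propWPLnr}, and using the differentiability of $\JJ$ together with the boundedness of $\partial\JJ$ on bounded sets to absorb the remainder term; the continuity of the resulting derivative then follows from the strong continuity of both $u\mapsto z^{u,v}$ and $\partial\JJ$.
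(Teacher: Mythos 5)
Your proposal is correct and matches the paper's own argument exactly: the paper derives this proposition as an immediate consequence of Propositions \ref{propWPLnr} and \ref{propDifcost} via the chain rule for $C^1$ maps between Banach spaces, which is precisely your composition $\JJ \circ \Phi$ with $\Phi(u)=(x^u,u)$. Your added sketch of how one would verify the chain rule directly (expanding $x^{u+\varepsilon v} = x^u + \varepsilon z^{u,v} + o(\varepsilon)$ in $\XX$) is more detail than the paper provides, but it is the same route.
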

		
We can now proceed to the proof of the main result.	
	
\begin{proof}[Proof of Theorem \ref{th:main}]
	We now fix $(x,u)=(\bar{x},\bar{u})$ which are optimal for Problem \ref{ProbU}. Recall that by Lemma \ref{propDualityRelation}, for $v\in\U$ it holds that \begin{equation}\label{optDualityRelation}\begin{split}
			0=&\E\int_{0}^{T}\left[\langle z_t^{\bar{u},v},\partial_x f(t)\rangle_{V_0,{V_0}'}+\langle \partial_u A(t) v_t,p_t\rangle_{V_1,{V_1}'}+\langle \partial_u \sigma(t) v_t,q_t\rangle_{\LL_0,{\LL_0}'}\right]dt\\&\qquad+\E\langle z_T^{\bar{u},v},\partial_x h(x_T)\rangle_{H,H'}.
		\end{split}
	\end{equation} 
			
	Let $v\in T^b_{\U^{ad}}(\bar{u})$ and $\varepsilon_k\to0$. By definition, there exist $\{v_k\}\subset \U$ such that $v_k\to v$ and $\bar{u}+\varepsilon_k v_k\in \U^{ad}$. We denote $u_k:=\bar{u}+\varepsilon_k v_k$ and $x_k=x^{u_k}$. Since $u\mapsto\JJ(x^u,u)$ is $C^1$, it follows that it is locally Lipschitz continuous. Let $R$ be its Lipschitz constant in some neighborhood of $\bar{u}$ and let $k$ be large enough that $u_k$ belongs to said neighborhood. We have \begin{align*}				\lim_{k\to \infty}\frac{\JJ(x^{u+\varepsilon_k v},u+\varepsilon_kv)-\JJ(\bar{x},\bar{u})}{\varepsilon_k}=&\lim_{k\to \infty}\frac{\JJ(x^{u+\varepsilon_k v},u+\varepsilon_k v)-\JJ(x_k,u_k)}{\varepsilon_k}\\&\qquad+\lim_{k\to \infty}\frac{\JJ(x_k,u_k)-\JJ(\bar{x},\bar{u})}{\varepsilon_k}\\\ge& -R\frac{\varepsilon_k\norme{v_k-v}_{\U}}{\varepsilon_k}+0=-R\norme{v_k-v}_{\U}.\end{align*}
    
    Taking $k\to \infty$ we obtain \begin{equation}\label{optCondPrimal} \E\int_{0}^{T}\left[\langle z^{\bar{u},v}_t,\partial_x f(t)\rangle_{V}+\langle v_t,\partial_u f(t)\rangle_{U,U'}\right]dt+\E \langle z^{\bar{u},v}_T,\partial_x h(x_T)\rangle_{H,H'}\ge0.\end{equation}
    
    Replacing \eqref{optDualityRelation} in \eqref{optCondPrimal} we get \begin{equation}\E\int_{0}^{T}\langle v_t,(\partial_u A(t))^*p_t+(\partial_u \sigma(t))^*q_t-\partial_u f(t)\rangle_{U,U'}dt\le0.\end{equation}
			
	We will need the following result, which is proven in \cite[Lemma 4.6]{wang2017optimal} for $X$ finite dimensional; and in \cite[Lemma 3.2.]{frankowska2020first}, with the Clarke tangent cone in place of the adjacent cone and with the assumption that $X$ is a separable Hilbert space. The proof is omitted as it doesn't change substantially.
			
    \begin{lemma}[{\cite[Lemma 4.6]{wang2017optimal},\cite[Lemma 3.2.]{frankowska2020first}}]\label{LemmaPointwise}
	   Assume $X$ is a separable Banach space, $(S,\Sigma,\mu)$ is a $\sigma-$finite measure. Denote $$\mathcal{K}=\left\{y\in L^2(S,\mu;X):y(\cdot)\in K\quad\mu-\text{a.e.}\right\}.$$
				
	   Suppose also that $y':S\to X'$ is weak$^*$-measurable, $\norme{y'}_{X'}\in L^{2}(S,\mu)$ and for every $w\in T^b_{\mathcal{K}}(y)$, $$\int_{S}\langle y'(s),w(s)\rangle_{X',X}d\mu(s)\le 0.$$
				
	   Then $\langle y'(s),w\rangle_{X',X}\le0\quad\forall w\in T^b_{K}(y(s))$ for $\mu-$a.e. $s\in S$ \qed
    \end{lemma}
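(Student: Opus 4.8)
The plan is to argue by contradiction, upgrading the integral inequality to a pointwise one through a measurable selection combined with a \emph{localized} construction of adjacent vectors to $\mathcal{K}$. Since $\mu$ is $\sigma$-finite, write $S=\bigcup_n S_n$ with $\mu(S_n)<\infty$; as the conclusion is a countable conjunction of statements over the $S_n$, it suffices to work on a fixed set of finite measure, which I still denote $S$. Fix a measurable representative of $y(\cdot)$ and recall that $X$ is separable.

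Suppose the conclusion fails, so that the set
$$S_0:=\left\{s\in S:\exists\,w\in T^b_K(y(s))\text{ with }\langle y'(s),w\rangle_{X',X}>0\right\}$$
has positive measure. Because $T^b_K(y)$ is a cone (if $d\in T^b_K(y)$ and $\lambda\ge0$ then $\lambda d\in T^b_K(y)$, by rescaling the sequence $\varepsilon_k$), witnesses may be sought in the closed unit ball. Using separability of $X$, graph measurability of the multifunction $s\mapsto T^b_K(y(s))$ (which follows from the sequential definition of the adjacent cone together with the closedness of $K$ and the measurability of $y(\cdot)$), and the weak$^*$-measurability of $y'$, the Kuratowski--Ryll-Nardzewski / Aumann measurable selection theorem yields a measurable $w^*:S_0\to X$ with $\norme{w^*(s)}_X\le 1$, $w^*(s)\in T^b_K(y(s))$, and $\langle y'(s),w^*(s)\rangle_{X',X}>0$ for a.e. $s\in S_0$. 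Extend $w^*$ by $0$ on $S\setminus S_0$; then $w^*\in L^2(S,\mu;X)$.

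The core of the argument is to show $w^*\in T^b_{\mathcal{K}}(y)$. Fix an arbitrary sequence $\varepsilon_k\to0^+$. For a.e. $s$ we have $w^*(s)\in T^b_K(y(s))$, so applying the definition of the adjacent cone to this sequence produces elements converging to $w^*(s)$ whose $\varepsilon_k$-perturbation of $y(s)$ remains in $K$; consequently $\phi_k(s):=\inf\{\norme{d-w^*(s)}_X:y(s)+\varepsilon_k d\in K\}$ satisfies $\phi_k(s)\to0$ a.e. A measurable near-minimizer $\tilde d_k(\cdot)$ (again by measurable selection, the set $\{d:y(s)+\varepsilon_k d\in K\}$ being closed and measurably dependent on $s$) obeys $\norme{\tilde d_k(s)-w^*(s)}_X\le\phi_k(s)+1/k$. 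To secure $L^2$-convergence I truncate: set $w_k(s)=\tilde d_k(s)$ where $\phi_k(s)\le1$ and $w_k(s)=0$ otherwise (on the latter set $y(s)+\varepsilon_k w_k(s)=y(s)\in K$). Then $\norme{w_k(s)}_X\le2+1/k$, one has $y(\cdot)+\varepsilon_k w_k(\cdot)\in\mathcal{K}$, and $w_k(s)\to w^*(s)$ a.e. (the exceptional set $\{\phi_k>1\}$ becomes null in the limit); dominated convergence on the finite measure space gives $w_k\to w^*$ in $L^2(S,\mu;X)$. Hence $w^*\in T^b_{\mathcal{K}}(y)$.

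Finally, the hypothesis applied to $w^*\in T^b_{\mathcal{K}}(y)$ gives $\int_S\langle y'(s),w^*(s)\rangle_{X',X}\,d\mu(s)\le0$, whereas by construction the integrand vanishes off $S_0$ and is strictly positive on $S_0$, a set of positive measure, so the integral is strictly positive --- a contradiction. I expect the measurability bookkeeping to be the main obstacle: establishing graph measurability of the adjacent-cone multifunction and performing the measurable choice of near-minimizers $\tilde d_k$ under only weak$^*$-measurability of $y'$. The truncation step is what upgrades purely pointwise tangency to the $L^2$-tangency needed to invoke the integral inequality, and the conicity of $T^b_K$ is what lets us normalize the witnesses so that domination applies.
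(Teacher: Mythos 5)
Your proof is correct and follows the standard route behind this lemma: argue by contradiction, use conicity to normalize witnesses into the unit ball, select them measurably (Aumann/Kuratowski--Ryll-Nardzewski, after completing the measure space so the projection and selection theorems apply), and upgrade pointwise adjacency to $L^2$-adjacency via measurable near-minimizers of $s\mapsto \mathrm{dist}\bigl(w^*(s),\{d: y(s)+\varepsilon_k d\in K\}\bigr)$ together with dominated convergence --- note that since $0\in\{d: y(s)+\varepsilon_k d\in K\}$ a.e., your truncation set $\{\phi_k>1\}$ is actually null, and your reduction to finite measure implicitly uses that zero-extension of adjacent directions on $S_n$ produces adjacent directions on $S$. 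The paper itself omits the proof, deferring to \cite[Lemma 4.6]{wang2017optimal} and \cite[Lemma 3.2]{frankowska2020first}, whose arguments proceed by exactly this contradiction-plus-measurable-selection scheme, so your reconstruction matches the intended proof.
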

			
	A direct application of Lemma \ref{LemmaPointwise} recovers \eqref{optCondDualPointwise}.
			
	To end the proof, assuming that \ref{hyp:PMP} holds we establish \eqref{optCondPMP}.
			
	Denote $$H(t,x,u,p)=\langle A(t,x,u),p\rangle_{V_1,{V_1}'}-f(t,x,u).$$
			
	Denote as well $$f_{t,x}^\varepsilon =\int_0^1\partial_x f(t,(1-\theta)\bar{x}_t+\theta x_t^{u^\varepsilon},u_t^\varepsilon)d\theta,$$ $$h_{x}^\varepsilon =\int_0^1\partial_x h((1-\theta)\bar{x}_T+\theta x_T^{u^\varepsilon})d\theta,$$
			
	Similarly to \eqref{dualityRelation}, it holds that \begin{equation*}\begin{split}
			0=&\E\int_{0}^{T}\langle z_t^{\varepsilon},\partial_x f(t)\rangle_{V_0,{V_0}'}dt+\E\int_{t_0}^{t_0+\varepsilon}\langle  A(t,\bar{x}_t,v_t)-A(t,\bar{x}_t,\bar{u}_t),p_t\rangle_{V_1,{V_1}'}dt\\&\qquad+\E\langle z_T^{\varepsilon},\partial_x h(x_T)\rangle_{H,H'}.
		\end{split}
		\end{equation*}
		
	This implies the relation
			
	\begin{equation}\label{EqLandau}\begin{split}
		&\frac{1}{\varepsilon}\E\int_{t_0}^{t_0+\varepsilon}[H(t,\bar{x}_t,\bar{u}_t,p_t)-H(t,\bar{x}_t,v_t,p_t)]dt\\=&\frac{1}{\varepsilon}\E\int_{t_0}^{t_0+\varepsilon}\langle  A(t,\bar{x}_t,v_t)-A(t,\bar{x}_t,\bar{u}_t),p_t\rangle_{V_1,{V_1}'}dt
		+\frac{1}{\varepsilon}\E\int_{t_0}^{t_0+\varepsilon}[f(t,\bar{x}_t,v_t)-f(t,\bar{x}_t,\bar{u}_t)]dt\\
		=&-\frac{1}{\varepsilon}\E\int_{0}^{T}\langle z_t^{\varepsilon},\partial_x f(t)\rangle_{V_0,{V_0}'}dt-\frac{1}{\varepsilon}\E\langle z_T^{\varepsilon},\partial_x h(x_T)\rangle_{H,H'}
		+\frac{1}{\varepsilon}\E[h(x^\varepsilon_T)-h(\bar{x}_T)]-\frac{1}{\varepsilon}\E[h(x^\varepsilon_T)-h(\bar{x}_T)]\\
		&\qquad+\frac{1}{\varepsilon}\E\int_{0}^{T}[f(t,\bar{x}_t,u^{\varepsilon}_t)-f(t,x^{\varepsilon}_t,u^\varepsilon_t)]dt
		+\frac{1}{\varepsilon}\E\int_{0}^{T}[f(t,x^{\varepsilon}_t,u^\varepsilon_t)-f(t,\bar{x}_t,\bar{u}_t)]dt\\
		=&\frac{\JJ(x^\varepsilon,u^\varepsilon)-\JJ(\bar{x},\bar{u})}{\varepsilon}+\frac{1}{\varepsilon}\E\int_{0}^{T}\langle r_t^{\varepsilon},f_{t,x}^\varepsilon\rangle_{V_0,{V_0}'}dt
		+\frac{1}{\varepsilon}\E\int_{0}^{T}\langle z_t^{\varepsilon},f_{t,x}^\varepsilon-\partial_x f(t)\rangle_{V_0,{V_0}'}dt\\
		&\qquad+\frac{1}{\varepsilon}\E\langle r_T^{\varepsilon},h_{x}^\varepsilon\rangle_{H,H'}+\frac{1}{\varepsilon}\E\langle z_T^{\varepsilon},h_{x}^\varepsilon-\partial_x h(x_T)\rangle_{H,H'}.
	\end{split}
	\end{equation}
			
	Because of the norm continuity of assumption \ref{hyp:PMP} and the bounds of Proposition \ref{VarSpike}, the dominated convergence theorem shows that the $\liminf$ of the right hand side is positive. Then \eqref{optCondPMP} follows from a standard localization argument, see step 7 in the proof of \cite[Theorem 12.17]{lu2021} for a detailed proof.
\end{proof}
	
\if{\begin{remark}
	We mention that the passage from \eqref{EqLandau} to \eqref{optCondPMP} has become mathematical folklore in the stochastic control community, but the proof is not as trivial as might seem and the assumption that $\Omega$ is a separable probability space is often not stated explicitly. See step 7 in the proof of \cite[Theorem 12.17]{lu2021} for a detailed proof.
\end{remark}}\fi

\section{Applications}\label{sec:appSPM}

We now mention several classes of optimal control problems for nonlinear SPDEs which are included in the previous results. Throughout this section, $\OO\subset \R^d$ is a bounded and open domain with boundary $\partial\OO$ regular enough that the results of \cite{Grieser2002Laplacian} and of \cite{Grisvard85Elliptic} apply, the exact regularity then depends on dimension and the boundary conditions considered.

\subsection{Nonlinear divergence form equations}

Take the state equation \begin{equation}\label{StateEq:ExDiv}\begin{array}{rll}
		dX_t(\xi)  &=  \left[\nabla\cdot\Phi(\nabla X_t(\xi)) +u_t(\xi)\right]dt + \sum_{k=1}^{\infty} \mu_{k}X_t(\xi) e_{k}(\xi)
		dw^{k}_t,&(\xi,t)\in\OO\times(0,T),\\
		X_t(S) &= 0,&(S,t)\in\partial\OO\times(0,T)\\
		X_0(\xi) &= x_0(\xi),&\xi\in\OO,
\end{array}\end{equation} where $X_t$ is the state process, $\nabla$ is the gradient and $\nabla\cdot$ is the divergence operator, $\Phi: \mathbb{R}^d \rightarrow \mathbb{R}^d$ is a nonlinear function, and the initial state $x_0$ belongs to $L^2(\OO)$. Because $\Phi$ depends on the gradient of the state variable, we use $y$ to denote its $\R^d$-valued input, and use $\partial_y$ for derivatives (gradient or Jacobian) with respect to this input.

The terms $\{w^{k}_t\}$ are mutually independent one dimensional a Wiener processes (Brownian motions) and we assume that $\F$ is the filtration generated by $\{w^{k}_t\}_{k\in\N}$, enlarged by the $\Prob$-null sets.

The functions $\{e_{k}\}$ are an orthonormal basis of $L^{2}(\OO)$ consisting of eigenfunctions of the Laplacian with Dirichlet boundary conditions. Denote the eigenvalues by $\{\lambda_{k}\}$.

The  input $u$ is a progressively measurable $L^{2}(\OO)$-valued stochastic process and represents an interior control on the domain.

Here, we assume that the function $\Phi$ and the coefficients $\{\mu_k\}$ satisfy the following assumptions:
\begin{enumerate}[label={$(H_\arabic*)$}, labelwidth=0.8cm]
	\item{\label{hyp:1:ExDiv}} The function $\Phi: \R^d \to \R^d$ is assumed to be Lipschitz continuous, of class $C^1$, and monotone. Specifically, we assume the existence of constants $0 < M_* \leq M^*$ such that for all $y,z\in\R^d$,the Jacobian $\partial_y\Phi(y)\in\R^{d\times d}$ satisfies $$M_*|z|^2 \leq \partial_y\Phi(y)z\cdot z\leq M^*|z|^2.$$
	
	\item{\label{hyp:2:ExDiv}} For the eigenvalues $\{\lambda_{k}\}$ previously mentioned, it holds that \begin{equation}
		\sum_{k=1}^{\infty} \mu_{k}^{2} \lambda_{k}^{\frac{d-1}{2}} < \infty.
	\end{equation}
	
\end{enumerate}

We consider a closed subset of $U^{ad}\subset\R$ and consider the set of admissible controls defined as follows
\begin{equation*}
	\U^{ad}:=\left\{
	u\in 
	\U\mid 
	u_t(\xi)\in U^{ad} \ \mbox{ for a.e. } 
	(\omega,\xi,t)\in \Omega\times\OO\times(0,T), \right\}.
\end{equation*}

Consider the cost function $\JJ: \XX\times \U \longrightarrow \R$ defined by 
\begin{equation}\label{cost:ExDiv}\begin{split}
		\JJ(X,u):=&\E\left[ \int_{0}^{T}\!\!\!\!\int_{\OO}f(t,X_t(\xi),\nabla X_t(\xi),u_t(\xi),\xi)d\xi dt\right.\\&\qquad\left.+\int_{0}^{T}\!\!\!\!\int_{\partial\OO}g(t,X_t(S),S)dS dt+ \int_{\OO}h(X_t(\xi),\xi)d\xi\right].
	\end{split}
\end{equation}

\begin{enumerate}[label={$(H_{\arabic*})$}, resume,labelwidth=0.8cm]
	\item{\label{hyp:3:ExDiv}} The mappings $(\omega,t)\mapsto f(t,x,y,u,\xi),g(t,x,S)$ are progressively measurable for every $(x,y,u)\in\R\times \R^d\times\R$ and almost every $(\xi,S)\in\OO\times\partial\OO$.
	
	The mapping $(x,y,u)\mapsto (f(t,x,y,u,\xi),g(t,x,S))$ is $C^1$ for almost every $(\omega,t,\xi,S)\in \Omega\times[0,T]\times\OO\times\partial\OO$. Moreover, the derivatives satisfy $$\left|\partial_{x,y,u}f(t,x,y,u,\xi)\right|\le M_f(\omega,t,\xi)+C|x|+C|y|+C|u|,\quad\left|\partial_x g(t,x,S)\right|\le M_g(\omega,t,S)+C|x|,$$ for some $M_f\in L_{\F}^2(\Omega;L^2(0,T;L^2(\OO)))$, $M_g\in L_{\F}^2(\Omega;L^2(0,T;L^2(\partial\OO)))$ and $C\ge 0$.
	
	We have $$(\omega,t,\xi)\mapsto f(t,0,0,0,\xi)\in L_{\F}^1(\Omega\times[0,T]\times\OO),\quad(\omega,S,t)\mapsto g(t,0,S)\in L_{\F}^1(\Omega\times[0,T]\times\partial\OO).$$
	
	\item{\label{hyp:4:ExDiv}} For the final time cost, we assume that $\omega\mapsto h(x,\xi)$ is $\FF_T$-measurable for every $x\in\R$ and almost every $\xi\in\OO$.
	
	The mapping $x\mapsto h(x,\xi)$ is $C^1$ for almost every $(\omega,\xi)\in \Omega\times\OO$.
	Moreover, the derivative satisfies $$\left|\partial_x h(x,\xi)\right|\le M_h(\omega,\xi)+C|x|,$$ for some $M_h\in L_{\FF_T}^2(\Omega;L^2(\OO))$ and $C\ge 0$.

\end{enumerate}

We consider the Gelfand triple $$V_0:=H_{0}^{1}(\OO)\subset H:=H^{-1}(\OO)\subset V_1=H^{-1}(\OO)=(H_{0}^{1}(\OO))'.$$

Let $U= L^2(\OO)$. We define the operator $A:V_0\times U\to V_1$ by \begin{equation}\label{DefAExDiv}
	\langle A(x,u),\phi\rangle_{V_1,V_0}:= -\int_{\OO}\nabla\Phi(x(\xi))\cdot\nabla\phi(\xi) d\xi+\int_{\OO}u(\xi)\phi(\xi) d\xi.
\end{equation}

It's clear that $A\in C^1(V_0\times U,V_1)$ with $$\langle \partial_x A(x,u)z,\phi\rangle_{V_1,V_0}:= -\int_{\OO}\partial_y\Phi(x(\xi))\nabla z(\xi)\cdot\nabla\phi(\xi) d\xi$$ and $$\langle \partial_u A(x,u)v,\phi\rangle_{V_1,V_0}= \int_{\OO}v(\xi)\phi(\xi) d\xi.$$

For $\phi\in V_0$, it holds that  \begin{align*}
	\langle \partial_x A(x,u)z+\partial_u A(x,u)v,\phi\rangle_{V_1,V_0}=&-\int_{\OO}\partial_y\Phi(x(\xi))\nabla z(\xi)\cdot\nabla\phi(\xi)d\xi+\int_{\OO}v(\xi)\phi(\xi) d\xi\\\le& \norme{\partial_y\Psi(x(\cdot))\nabla z(\cdot)}_{(L^2)^d}\norme{\nabla \phi}_{(L^2)^d}+\norme{v}_{L^2}\norme{\phi}_{L^2}\\\le& M^*\norme{z}_{V_0}\norme{\phi}_{V_0}+C\norme{v}_U\norme{\phi}_{V_0},
\end{align*} for some $C$ (the constant from the Poincar\'e inequality), so that \eqref{AbsBoundDrift} holds with $B=\max\{M^*,C\}$.

Hypothesis \ref{hyp:1:ExDiv} implies that \begin{equation*}
	\langle \partial_x A(x,u)z,z\rangle_{V_1,V_0}=-\int_{\OO}\partial_y\Phi(x(\xi))\nabla z(\xi)\cdot\nabla z(\xi) d\xi\le -M_*\norme{z}_{V_0}^2
\end{equation*} so that \eqref{AbsCoer} is verified.

We consider the cylindrical Wiener process on $K:=L^2(\OO)$ given by the formal series
\begin{equation*}
	W_t = \sum_{k=1}^{\infty} w^{k}_{t} e_{k}, \quad t \geq 0,
\end{equation*} where $\{w^{k}\}$ is a sequence of independent $1$-dimensional Wiener processes.

We define the multiplicative noise operator $\sigma$ by \begin{equation}\label{DefSigmaEx:Div}
	\left([\sigma(x)]h\right)(\xi)=\sum_{k=1}^{\infty} \mu_{k} \left\langle h, e_{k} \right\rangle_{L^2(\OO)} x(\xi) e_{k}(\xi).
\end{equation}

Based on the bounds established in \cite{Grieser2002Laplacian}, there exists a constant $C>0$ such that
\begin{equation*}
	\sum_{k=1}^{\infty} \mu_{k}^{2} \left| xe_{k} \right|_{H}^{2} \leq C \sum_{k=1}^{\infty} \mu_{k}^{2} \lambda_{k}^{\frac{d-1}{2}} \left| x \right|_{H}^{2}
\end{equation*}
for all $x \in L^2(\OO)$. Consequently, the condition $\sigma \in \LL(H,\LL_0)=\mathcal{L}(L^2(\OO), \mathcal{L}_{2}(L^2(\OO), L^2(\OO)))$ simplifies to 
\begin{equation}\label{hyp:Noise:Div}
	\sum_{k=1}^{\infty} \mu_{k}^{2} \lambda_{k}^{\frac{d+1}{2}} < \infty,
\end{equation} which holds due to assumption \ref{hyp:2:ExDiv}.

Because $\sigma$ depends only on $x$ and belongs to $\LL(H,\LL_0)$, Hypothesis \ref{hyp:dyn:2} is verified trivially.

The costate equation \eqref{BSEESPM:main} then reads as  \begin{equation}\label{CoStateEqSPM:Div}\begin{array}{rll}
		dp_t(\xi)&=-\left[\nabla\cdot\partial_y\Phi(\bar{X}_t(\xi))\nabla p_t(\xi)+\sum_{k=1}^{\infty} \mu_k q^k_t(\xi)e_k(\xi)-\partial_x f(t,\bar{X}_t,\nabla \bar{X}_t,\bar{u}_t,\xi)\right.&\\&\qquad\left.+\nabla\cdot\partial_yf(t,\bar{X}_t,\nabla \bar{X}_t,\bar{u}_t,\xi)\right]dt
		+\sum_{k=1}^{\infty} q^k_t(\xi)dw^{k}_{t},&(\xi,t)\in\OO\times(0,T)\\
		p_t(\xi)&=0,&(\xi,t)\in\partial\OO\times(0,T)\\
		p_T(\xi)&=-\partial_x h(\bar{X}_t,\xi),&\xi\in\OO
	\end{array}
\end{equation} and the maximum principle holds:
\begin{itemize}
	\item[-] For a.e. $(\omega,t,\xi)\in \Omega\times (0,T)\times \OO$, it holds that
	\begin{equation}
		\label{optCondDualPointwiseInterior:Div}
		p_t(\xi)\bar{u}(\xi)-f(t,\bar X_t(\xi),\nabla \bar{X}_t(\xi),\bar u_t(\xi),\xi)\ge p_t(\xi)u-f(t,\bar X_t(\xi),\nabla \bar{X}_t(\xi),u,\xi)\quad\forall u\in U^{ad}.
	\end{equation}
\end{itemize}

\subsection{Burgers type equations}

Take the state equation \begin{equation}\label{StateEq:ExBurg}\begin{array}{rll}
		dX_t(\xi)  &=  \left[\partial_\xi^2 X_t(\xi)+b(X_t(\xi))b(\partial_\xi X_t(\xi))  +u_t(\xi)\right]dt + \sum_{k=1}^{\infty} \mu_{k}X_t(\xi) e_{k}(\xi)
		dw^{k}_t,&(\xi,t)\in(0,1)\times(0,T),\\
		X_t(S) &= 0,&(S,t)\in\{0,1\}\times(0,T)\\
		X_0(\xi) &= x_0(\xi),&\xi\in(0,1),
\end{array}\end{equation} where $X_t$ is the state process, $\partial_\xi$ is the space derivative, $b: \mathbb{R} \rightarrow \mathbb{R}$ is a nonlinear function, and the initial state $x_0$ belongs to $L^2(0,1)$. We consider the same cost structure as the previous example.

\begin{enumerate}[label={$(H_\arabic*)$}, resume, labelwidth=0.8cm]
	\item{\label{hyp:5:ExBurg}} The function $b: \R \to \R$ is assumed to be $C^1$. Suppose that $|b(x)|+|b'(x)|\le C_b$ for all $x\in\R$.
\end{enumerate}

In the case $b(x)=x$ corresponds to the Burgers equation, in this framework we are able to consider only smoothed truncated approximations of the identity, say for instance $b(x)=\lambda\arctan(x/\lambda)$. We remark that one could take independent approximations of the identity for $X$ and $\partial_\xi X$, but we take the same function to simplify.

We consider the same Gelfand triple $$V_0:=H_{0}^{1}(0,1)\subset H:=H^{-1}(0,1)\subset V_1=H^{-1}(0,1)=(H_{0}^{1}(0,1))'.$$

Let $U= L^2(0,1)$. We define the operator $A:V_0\times U\to V_1$ by \begin{equation}\label{DefAExBurg}
	\langle A(x,u),\phi\rangle_{V_1,V_0}:= -\int_{0}^{1}\partial_\xi x(\xi)\cdot\partial_\xi\phi(\xi) d\xi+\int_{0}^{1}\left[b(x(\xi)) b(\partial_\xi x(\xi))\right]\phi(\xi) d\xi+\int_{0}^{1}u(\xi)\phi(\xi) d\xi.
\end{equation}

It's clear that $A\in C^1(V_0\times U,V_1)$ with \begin{equation*}
	\begin{split}
		\langle \partial_x A(x,u)z,\phi\rangle_{V_1,V_0}=& -\int_{0}^{1}\partial_\xi z(\xi)\partial_\xi\phi(\xi) d\xi+\int_{0}^{1}\left[b'(x(\xi))b(\partial_\xi x(\xi))\right]z(\xi)\phi(\xi) d\xi\\&\qquad+\int_{0}^{1}\left[b(x(\xi))b'(\partial_\xi x(\xi))\right]\partial_\xi z(\xi)\phi(\xi) d\xi
	\end{split}
\end{equation*} and \begin{equation*}
	\langle \partial_u A(x,u)v,\phi\rangle_{V_1,V_0}= \int_{0}^{1}v(\xi)\phi(\xi) d\xi.
\end{equation*}

For $\phi\in V_0$, it holds that  \begin{align*}
	\langle \partial_x A(x,u)z+\partial_u A(x,u)v,\phi\rangle_{V_1,V_0}=&-\int_{0}^{1}\partial_\xi z(\xi)\partial_\xi\phi(\xi)d\xi+\int_{0}^{1}\left[b'(x(\xi))b(\partial_\xi x(\xi))\right]z(\xi)\phi(\xi) d\xi\\&\qquad+\int_{0}^{1}\left[b(x(\xi))b'(\partial_\xi x(\xi))\right]\partial_\xi z(\xi)\phi(\xi) d\xi+\int_{0}^{1}v(\xi)\phi(\xi) d\xi\\\le& \norme{z}_{V_0}\norme{\phi}_{V_0}+C_b^2C^2\norme{z}_{V_0}\norme{\phi}_{V_0}\\&\qquad+C_b^2C\norme{z}_{V_0}\norme{\phi}_{V_0}+C\norme{v}_{U}\norme{\phi}_{V_0},
\end{align*} for some $C$ (the constant from the Poincar\'e inequality), so that \eqref{AbsBoundDrift} holds.

Hypothesis \ref{hyp:5:ExBurg} implies that \begin{equation*}\begin{split}
	\langle \partial_x A(x,u)z,z\rangle_{V_1,V_0}=&-\int_{0}^{1}|\partial_\xi z(\xi)|^2d\xi+\int_{0}^{1}\left[b'(x(\xi))b(\partial_\xi x(\xi))\right]|z(\xi)|^2d\xi\\&\qquad+\int_{0}^{1}\left[b(x(\xi))b'(\partial_\xi x(\xi))\right]\partial_\xi z(\xi)z(\xi) d\xi\\\le& -\norme{z}_{V_0}^2+C_b^2\norme{z}_{H}^2+C_b^2\norme{z}_{V_0}\norme{z}_{H}\\\le& -\frac{1}{2}\norme{z}_{V_0}^2+\left(C_b^2+\frac{C_b^4}{2}\right)\norme{z}_{H}^2
\end{split}
\end{equation*} so that \eqref{AbsCoer} is verified.

We borrow the assumptions \ref{hyp:2:ExDiv}-\ref{hyp:4:ExDiv} of the previous example, as well as the formulation of $W$ and $\sigma$.

The costate equation \eqref{BSEESPM:main} then reads as  \begin{equation}\label{CoStateEqSPM:Burg}\begin{array}{rll}
		dp_t(\xi)&=-\left[\partial_\xi^2 p_t(\xi)+b'(\bar{X}(\xi))b(\partial_\xi\bar{X}(\xi))p_t(\xi)\right.&\\&\qquad-\partial_\xi[b(\bar{X}(\xi))b'(\partial_\xi\bar{X}(\xi))p_t(\xi)]+\sum_{k=1}^{\infty} \mu_k q^k_t(\xi)e_k(\xi)&\\&\qquad\left.-\partial_x f(t,\bar{X}_t,\partial_\xi \bar{X}_t,\bar{u}_t,\xi)+\partial_\xi\partial_yf(t,\bar{X}_t,\partial_\xi \bar{X}_t,\bar{u}_t,\xi)\right]dt
		&\\&\qquad+\sum_{k=1}^{\infty} q^k_t(\xi)dw^{k}_{t},&(\xi,t)\in(0,1)\times(0,T)\\
		p_t(\xi)&=0,&(\xi,t)\in\{0,1\}\times(0,T)\\
		p_T(\xi)&=-\partial_x h(\bar{X}_t,\xi),&\xi\in(0,1)
	\end{array}
\end{equation} and the maximum principle holds:
\begin{itemize}
	\item[-] For a.e. $(\omega,t,\xi)\in \Omega\times (0,T)\times (0,1)$, it holds that
	\begin{equation}
		\label{optCondDualPointwiseInterior:Burg}
		p_t(\xi)\bar{u}(\xi)-f(t,\bar X_t(\xi),\nabla \bar{X}_t(\xi),\bar u_t(\xi),\xi)\ge p_t(\xi)u-f(t,\bar X_t(\xi),\nabla \bar{X}_t(\xi),u,\xi)\quad\forall u\in U^{ad}.
	\end{equation}
\end{itemize}

\subsection{Porous media type equations}

Consider the following state equation of the stochastic porous media type:
\begin{equation}\label{StateEq:ExSPM}\begin{array}{rll}
		dX_t(\xi)  &=  \left[\Delta \Psi(X_t(\xi)) +u_t^\OO(\xi)\right]dt + \sum_{k=1}^{\infty} \mu_{k}X_t(\xi) e_{k}(\xi) dw^{k}_t,&(\xi,t)\in\OO\times(0,T),\\
		X_0(\xi) &= x_0(\xi),&\xi\in\OO,
\end{array}\end{equation} 
where $X_t$ is the state process, $\Delta$ is the Laplace operator, for which we will consider different boundary conditions, $\Psi: \mathbb{R} \rightarrow \mathbb{R}$ is a nonlinear function, and the initial state $x_0$ belongs to $L^2(\OO)$.

The functions $\{e_{k}\}$ are an orthonormal basis of $L^{2}(\OO)$ consisting of eigenfunctions of the Laplacian with appropriate boundary conditions. We still denote the eigenvalues by $\{\lambda_{k}\}$.

The  input $u^{\OO}$ is a progressively measurable $L^{2}(\OO)$-valued stochastic processes and represents an interior control on the domain.

The  input $u^{\partial}$ is a progressively measurable $L^{2}(\partial\OO)$-valued stochastic processes and represents a control on the boundary. The way it enters the system will vary with the boundary conditions we will consider.

The operator $\Delta \Psi(X_t)$ captures the diffusion or porous media effects.

The stochastic porous media equation has been the subject of extensive research in various contexts in recent years (see \cite{PM} for a comprehensive monograph on the topic). Depending on the growth rate of the function $\Psi$, the equation models different types of diffusion phenomena. For growth rates above unity, the equation describes slow diffusion processes, whereas for sub-unit growth rates, it corresponds to fast diffusion. In cases where the growth rate is negative, the equation even models super-fast diffusion phenomena.

Here, we assume that the function $\Psi$ and the coefficients $\{\mu_k\}$ satisfy the following assumptions:
\begin{enumerate}[label={$(H_\arabic*)$}, resume, labelwidth=0.8cm]
	\item{\label{hyp:6:ExSPM}} The function $\Psi: \R \to \R$ is assumed to be Lipschitz continuous, of class $C^1$, and monotone. Specifically, we assume the existence of constants $0 < M_* \leq M^*$ such that for all $x\in\R$, it holds that $M_* \leq \Psi'(x)\leq M^*$.
	
	\item{\label{hyp:7:ExSPM}} For the eigenvalues $\{\lambda_{k}\}$ previously mentioned, it holds that \begin{equation}
		\sum_{k=1}^{\infty} \mu_{k}^{2} \lambda_{k}^{\frac{d+1}{2}} < \infty.
	\end{equation}
	
\end{enumerate}

We consider $U^\OO$ and $U^\partial$ two closed subsets of $\R$ and consider the set of admissible controls defined as follows
\begin{eqnarray}
	\U^{ad}:=\left\{
	(u^\OO,u^{\partial})\in 
	\U\mid 
	u^\OO_t(\xi)\in U^\OO \ \mbox{ for a.e. } 
	(\omega,\xi,t)\in \Omega\times\OO\times(0,T), \right.\notag\\
	\left. \mbox{ and }
	u^\partial_t(S)\in U^\partial \ \mbox{ for a.e. } 
	(\omega,S,t)\in \Omega\times\partial \OO\times(0,T) \right\}.\notag
\end{eqnarray}

In the sequel, we consider the cost function $\JJ: \XX\times \U \longrightarrow \R$ defined by 
\begin{equation}\label{cost:ExSPM:D}\begin{split}
		\JJ(X,u):=&\E\left[ \int_{0}^{T}\!\!\!\!\int_{\OO}f(t,X_t(\xi),u^{\OO}_t(\xi),\xi)d\xi dt\right.\\&\qquad\left.+\int_{0}^{T}\!\!\!\!\int_{\partial\OO}g(t,u^{\partial}_t(S),S)dS dt+ \int_{\OO}h(\xi)X_T(\xi)d\xi\right].
	\end{split}
\end{equation}

This cost is decomposed of three parts:  a running cost $f$ in the parabolic cylinder
$\OO\times(0,T)$; a running cost $g$ on the boundary $\partial \OO\times(0,T)$ and a final cost $h$ at time $T$. These three costs are supposed to satisfy the following assumptions. 
\begin{enumerate}[label={$(H_{\arabic*})$}, resume,labelwidth=0.8cm]
	\item{\label{hyp:8:ExSPM}} The mappings $(\omega,t)\mapsto f(t,x,u,\xi),g(t,u,S)$ are progressively measurable for every $(x,u)\in\R^2$ and almost every $(\xi,S)\in\OO\times\partial\OO$.
	
	The mapping $(x,u)\mapsto (f(t,x,u,\xi),g(t,u,S))$ is $C^1$ for almost every $(\omega,t,\xi,S)\in \Omega\times[0,T]\times\OO\times\partial\OO$. Moreover, the derivatives satisfy $$\left|\partial_{x,u}f(t,x,u,\xi)\right|\le M_f(\omega,t,\xi)+C|x|+C|u|,\quad\left|\partial_u g(t,u,S)\right|\le M_g(\omega,t,S)+C|u|,$$ for some $M_f\in L_{\F}^2(\Omega;L^2(0,T;L^2(\OO)))$, $M_g\in L_{\F}^2(\Omega;L^2(0,T;L^2(\partial\OO)))$ and $C\ge 0$.
	
	We have $$(\omega,t,\xi)\mapsto f(t,0,0,\xi)\in L_{\F}^1(\Omega\times[0,T]\times\OO),\quad(\omega,S,t)\mapsto g(t,0,S)\in L_{\F}^1(\Omega\times[0,T]\times\partial\OO).$$
	
	\item{\label{hyp:9:ExSPM}} For the final time cost, we assume that $h\in L_{\mathcal{F}_T}^2(\Omega;H_0^1(\OO))$.

\end{enumerate}

\begin{remark}
	The fact that $g$ doesn't depend on $x$, the linearity of the final time cost and the regularity of $h$ are only imposed because the solution $X$ belongs to $L^2([0,T]\times\Omega\times \OO)$ and is therefore not regular enough for it's traces to be functions on the time and space boundary.
\end{remark}

\subsubsection{Dirichlet boundary conditions}

\if{For Equation \eqref{StateEqSPM}, we use the functional setting of $H^{-1}$ solutions, see for instance \cite[Example 4.1.11]{prevot2007concise}. Which is to say $V_0:=L^2(\OO)$, $H:=H^{-1}(\OO)$ and $V_1=(H^{2}(\OO)\cap H_{0}^{1}(\OO))'$.}\fi

We consider the state equation \eqref{StateEq:ExSPM} with the Dirichlet boundary condition \begin{equation}\label{BCSPM:D}
	\Psi(X_t(S)) = u_t^\partial(S),\qquad(S,t)\in\partial\OO\times(0,T)
\end{equation}

\if{Under \ref{hyp:1:ExSPM}, we will demonstrate that for every control input $(u^{\OO}, u^{\partial})$, the state equation \eqref{StateEqSPM} admits a unique solution  $X\in \XX$.
This result will be  proven in Section \ref{sec:StAdjEqSPM}
\medskip}\fi

We consider the Gelfand triple $$V_0:=L^2(\OO)\subset H:=H^{-1}(\OO)\subset V_1=(H^{2}(\OO)\cap H_{0}^{1}(\OO))'.$$

Let $U= L^2(\OO)\times L^2(\partial\OO)$. We define the operator $A:V_0\times U\to V_1$ by \begin{equation}\label{DefAExSPM:D}
	\langle A(x,u),\phi\rangle_{V_1,{V_1}'}:= \int_{\OO}\Psi(x(\xi))\Delta\phi(\xi) d\xi+\int_{\OO}u^{\OO}(\xi)\phi(\xi) d\xi-\int_{\partial\OO}u^{\partial}(S)\frac{\partial\phi}{\partial\nu}(S)dS
\end{equation} where $\phi\in {V_1}'= H^{2}(\OO)\cap H_{0}^{1}(\OO)$ and $\frac{\partial}{\partial\nu}$ denotes the Neumann trace.

It's clear that $A\in C^1(V_0\times U,V_1)$ with $$\langle \partial_x A(x,u)z,\phi\rangle_{V_1,{V_1}'}:= \int_{\OO}\Psi'(x(\xi))z(\xi)\Delta\phi(\xi) d\xi$$ and for $v=(v^\OO,v^{\partial})$, $$\langle \partial_u A(x,u)v,\phi\rangle_{V_1,{V_1}'}= \int_{\OO}v^{\OO}(\xi)\phi(\xi) d\xi-\int_{\partial\OO}v^{\partial}(S)\frac{\partial\phi}{\partial\nu}(S)dS.$$

For $\phi\in {V_1}'$, it holds that  \begin{align*}
	\langle \partial_x A(x,u)z+\partial_u A(x,u)v,\phi\rangle_{V_1,{V_1}'}=&\int_{\OO}\Psi'(x(\xi))z(\xi)\Delta\phi(\xi) d\xi+\int_{\OO}v^{\OO}(\xi)\phi(\xi) d\xi\\&\qquad-\int_{\partial\OO}v^{\partial}(S)\frac{\partial\phi}{\partial\nu}(S)dS\\\le& \norme{\Psi'(x(\cdot))z(\cdot)}_{V_0}\norme{-\Delta\phi}_{{V_0}'}+C\norme{v}_U\norme{\phi}_{{V_1}'}\\\le& M^*\norme{z}_{V_0}\norme{\phi}_{{V_1}'}+C\norme{v}_U\norme{\phi}_{{V_1}'},
\end{align*} for some $C$, so that \eqref{AbsBoundDrift} holds with $B=\max\{M^*,C\}$.

Hypothesis \ref{hyp:6:ExSPM} implies that \begin{equation*}
	\langle \partial_x A(x,u)z,z\rangle_{V_1,V_0}=\langle \partial_x A(x,u)z,(-\Delta_D)^{-1}z\rangle_{V_{1},{V_1}'}=-\int_{\OO}\Psi'(x(\xi))z^2(\xi) d\xi\le -M_*\norme{z}_{V_0}^2
\end{equation*} where $\Delta_D$ is the Dirichlet Laplacian, so that \eqref{AbsCoer} is verified.

We consider the cylindrical Wiener process on $K:=L^2(\OO)$ given by the formal series
\begin{equation*}
	W_t = \sum_{k=1}^{\infty} w^{k}_{t} e_{k}, \quad t \geq 0,
\end{equation*} where $\{w^{k}\}$ is a sequence of independent $1$-dimensional Wiener processes.

We define the multiplicative noise operator $\sigma$ by \begin{equation}\label{DefSigmaEx:D}
	\left([\sigma(x)]h\right)(\xi)=\sum_{k=1}^{\infty} \mu_{k} \left\langle h, e_{k} \right\rangle_{L^2(\OO)} x(\xi) e_{k}(\xi).
\end{equation}

Based on the bounds established in \cite{Grieser2002Laplacian} and \cite[Chapter 2, Appendix]{PM}, there exists a constant $C>0$ such that
\begin{equation}\label{hyp:Noise:SPM:D}
	\sum_{k=1}^{\infty} \mu_{k}^{2} \left| xe_{k} \right|_{H}^{2} \leq C \sum_{k=1}^{\infty} \mu_{k}^{2} \lambda_{k}^{\frac{d+1}{2}} \left| x \right|_{H}^{2}
\end{equation}
for all $x \in H^{-1}(\mathcal{O})$. Consequently, the condition $\sigma \in \LL(H,\LL_0)=\mathcal{L}(H^{-1}(\OO), \mathcal{L}_{2}(L^2(\OO), H^{-1}(\OO)))$ simplifies to assumption \ref{hyp:7:ExSPM}.

Because $\sigma$ depends only on $x$ and belongs to $\LL(H,\LL_0)$, Hypothesis \ref{hyp:dyn:2} is verified trivially.

The costate equation \eqref{BSEESPM:main} is then set in the Gelfand triple $${V_1}'=H^{2}(\OO)\cap H_{0}^{1}(\OO)\subset H'=H_{0}^{1}(\OO)\subset {V_0}'=L^2(\OO)$$ and reads as  \begin{equation}\label{CoStateEqSPM:D}\begin{array}{rll}
		dp_t(\xi)&=-\left[\Psi'(\bar{X}_t(\xi))\Delta p_t(\xi)+\sum_{k=1}^{\infty} \mu_k q^k_t(\xi)e_k(\xi)-\partial_x f(t,\bar{X}_t,\bar{u}^{\OO}_t,\xi)\right]dt&\\
		&\qquad+\sum_{k=1}^{\infty} q^k_t(\xi)dw^{k}_{t},&(\xi,t)\in\OO\times(0,T)\\
		p_t(S)&=0,&(S,t)\in\partial\OO\times(0,T)\\
		p_T(\xi)&=-h(\xi),&\xi\in\OO
	\end{array}
\end{equation} and applying Lemma \ref{LemmaPointwise} on the product spaces $\Omega\times[0,T]\times\OO$ and $\Omega\times[0,T]\times\partial\OO$ we obtain the optimality conditions:
\begin{enumerate}
	\item[-] For a.e. $(\omega,t,\xi)\in \Omega\times (0,T)\times \OO$, it holds that
	\begin{equation}
		\label{optCondDualPointwiseInterior:D}
		v^\OO[p_t(\xi)-\partial_uf(\bar X_t(\xi),\bar u^{\OO}_t(\xi),t)]\le0\quad\forall v^{\OO}\in T^b_{U^{\OO}(\xi)}(\bar{u}^{\OO}_t).
	\end{equation} 
	\item[-] For a.e. $(\omega,t,S)\in\Omega\times(0,T)\times\partial\OO$ it holds that
	\begin{equation}
		\label{optCondDualPointwiseBoundary:D}
		v^\partial\left[\frac{\partial p_t}{\partial\nu}(S)-\partial_ug(\bar u^{\partial}_t(S),t)\right]\le0\quad\forall v^{\partial}\in T^b_{U^{\partial}}(\bar{u}^{\partial}_t(S)).
	\end{equation}
\end{enumerate}

It's worth noting that these conditions are not merely formal, since $p_t\in H^2(\OO)\cap H_0^1(\OO)$ almost surely, even the Neumann trace is well defined.

\subsubsection{Neumann boundary conditions}

\if{For Equation \eqref{StateEqSPM}, we use the functional setting of $H^{-1}$ solutions, see for instance \cite[Example 4.1.11]{prevot2007concise}. Which is to say $V_0:=L^2(\OO)$, $H:=H^{-1}(\OO)$ and $V_1=(H^{2}(\OO)\cap H_{0}^{1}(\OO))'$.}\fi

We consider the state equation \eqref{StateEq:ExSPM} with the Neumann boundary condition \begin{equation}\label{BCSPM:N}
	\frac{\partial}{\partial\nu}\Psi(X_t(S)) = u_t^\partial(S),\qquad(S,t)\in\partial\OO\times(0,T)
\end{equation}

\if{Under \ref{hyp:1:ExSPM}, we will demonstrate that for every control input $(u^{\OO}, u^{\partial})$, the state equation \eqref{StateEqSPM} admits a unique solution  $X\in \XX$.
	This result will be  proven in Section \ref{sec:StAdjEqSPM}
	\medskip}\fi

For this case, we denote $\HH=\{\varphi\in H^{2}(\OO):\frac{\partial \varphi}{\partial\nu}=0\}$ with the norm $\norme{\varphi}_{\HH}=\norme{-\Delta\varphi+\varphi}_{L^2(\OO)}$. By the weak formulation of the Poisson Equation, $\HH$ is exactly the set of functions $\varphi$ such that the mapping $H^1(\OO)\ni x\mapsto \left( \varphi,x\right)_{H^1(\OO)}$ is $L^2(\OO)$-continuous, this means $\HH$ is the dual of $L^2(\OO)$ with pivot space $H^1(\OO)$. This means the dual of $L^2(\OO)$ with pivot space $(H^{1}(\OO))'$ is $\HH'$.

We consider the Gelfand triple $$V_0:=L^2(\OO)\subset H:=(H^{1}(\OO))'\subset V_1=\HH'.$$

Let $U= L^2(\OO)\times L^2(\partial\OO)$. We define the operator $A:V_0\times U\to V_1$ by \begin{equation}\label{DefAExSPM:N}
	\langle A(x,u),\phi\rangle_{V_1,{V_1}'}:= \int_{\OO}\Psi(x(\xi))\Delta\phi(\xi) d\xi+\int_{\OO}u^{\OO}(\xi)\phi(\xi) d\xi+\int_{\partial\OO}u^{\partial}(S)\phi(S)dS,
\end{equation} where $\varphi\in {V_1}'= \HH$.

It's clear that $A\in C^1(V_0\times U,V_1)$ with $$\langle \partial_x A(x,u)z,\phi\rangle_{V_1,{V_1}'}:= \int_{\OO}\Psi'(x(\xi))z(\xi)\Delta\phi(\xi) d\xi$$ and $$\langle \partial_u A(x,u)v,\phi\rangle_{V_1,{V_1}'}= \int_{\OO}v^{\OO}(\xi)\phi(\xi) d\xi+\int_{\partial\OO}v^{\partial}(S)\phi(S)dS.$$

For $\phi\in {V_1}'$, it holds that  \begin{align*}
	\langle \partial_x A(x,u)z+\partial_u A(x,u)v,\phi\rangle_{V_1,{V_1}'}=&\int_{\OO}\Psi'(x(\xi))z(\xi)\Delta\phi(\xi) d\xi\\&\qquad+\int_{\OO}v^{\OO}(\xi)\phi(\xi) d\xi+\int_{\partial\OO}v^{\partial}(S)\phi(S)dS\\\le& \norme{\Psi'(x(\cdot))z(\cdot)}_{V_0}\norme{-\Delta\phi}_{{V_0}'}+C\norme{v}_U\norme{\phi}_{{V_1}'}\\\le& M^*\norme{z}_{V_0}\left(\norme{\phi}_{{V_1}'}+\norme{\phi}_{{V_0}'}\right)+C\norme{v}_U\norme{\phi}_{{V_1}'},
\end{align*} for some $C$, so that \eqref{AbsBoundDrift} holds.

Hypothesis \ref{hyp:6:ExSPM} implies that \begin{equation*}\begin{split}
	\langle \partial_x A(x,u)z,z\rangle_{V_1,V_0}=&\langle \partial_x A(x,u)z,(-\Delta_N+Id)^{-1}z\rangle_{V_{1},{V_1}'}\\=&-\int_{\OO}\Psi'(x(\xi))z^2(\xi) d\xi+\int_{\OO}\Psi'(x(\xi))z(\xi)\varphi(\xi) d\xi\\\le& -M_*\norme{z}_{V_0}^2+M^*\norme{z}_{V_0}\norme{\varphi}_{{V_0}'}\\\le& -M_*\norme{z}_{V_0}^2+\frac{M_*}{2}\norme{z}_{V_0}^2+\frac{(M^*)^2}{2M_*}\norme{\varphi}_{{V_0}'}^2\\\le& -\frac{M_*}{2}\norme{z}_{V_0}^2+\frac{(M^*)^2}{2M_*}\norme{\varphi}_{H'}^2\\=& -\frac{M_*}{2}\norme{z}_{V_0}^2+\frac{(M^*)^2}{2M_*}\norme{z}_{H}^2,\end{split}
\end{equation*} where $\Delta_N$ is the Neumann Laplacian and $\varphi=(-\Delta_N+Id)^{-1}z$, so that \eqref{AbsCoer} is verified.

For $W$ and $\sigma$, we consider the same situation as the Dirichlet case, with the adaptation that the condition \eqref{hyp:Noise:SPM:D} is replaced by \begin{equation*}
\sum_{k=1}^{\infty} \mu_{k}^{2} \left| xe_{k} \right|_{H}^{2} \leq C \sum_{k=1}^{\infty} \mu_{k}^{2} (1+\lambda_{k})^{\frac{d+1}{2}} \left| x \right|_{H}^{2},
\end{equation*} to account for the first (null) eigenvalue.

The costate equation \eqref{BSEESPM:main} is then set in the Gelfand triple $${V_1}'=\HH\subset H'=H^{1}(\OO)\subset {V_0}'=L^2(\OO)$$ and reads as  \begin{equation}\label{CoStateEqSPM:N}\begin{array}{rll}
		dp_t(\xi)&=-\left[\Psi'(x_t(\xi))\Delta p_t(\xi)+\sum_{k=1}^{\infty} \mu_k q^k_t(\xi)e_k(\xi)-\partial_x f(t,\bar{X}_t,\bar{u}^{\OO}_t,\xi)\right]dt&\\
		&\qquad+\sum_{k=1}^{\infty} q^k_t(\xi)dw^{k}_{t},&(\xi,t)\in\OO\times(0,T)\\
		\frac{\partial p_t}{\partial\nu}(S)&=0,&(S,t)\in\partial\OO\times(0,T)\\
		p_T(\xi)&=-h(\xi),&\xi\in\OO
	\end{array}
\end{equation} and applying Lemma \ref{LemmaPointwise} on the product spaces $\Omega\times[0,T]\times\OO$ and $\Omega\times[0,T]\times\partial\OO$ we obtain the optimality conditions:
\begin{enumerate}
	\item[-] For a.e. $(\omega,t,\xi)\in \Omega\times (0,T)\times \OO$, it holds that
	\begin{equation}
		\label{optCondDualPointwiseInterior:N}
		p_t(\xi)\bar u^{\OO}_t(\xi)-f(\bar X_t(\xi),\bar u^{\OO}_t(\xi),t)\ge p_t(\xi)u-f(\bar X_t(\xi),u,t)\quad\forall u\in U^{\OO}:
	\end{equation} 
	\item[-] For a.e. $(\omega,t,S)\in\Omega\times(0,T)\times\partial\OO$ it holds that
	\begin{equation}
		\label{optCondDualPointwiseBoundary:N}
		 p_t(S)\bar u^{\partial}_t(S)-g(\bar u^{\partial}_t(S),t)\ge p_t(S)u-g(u,t)\quad\forall u\in U^\partial.
	\end{equation}
\end{enumerate}

\subsubsection{Robin boundary conditions}

We consider the state equation \eqref{StateEq:ExSPM} with the Robin boundary condition \begin{equation}\label{BCSPM:R}
	\frac{\partial}{\partial\nu}\Psi(X_t(S))-\alpha(S)\Psi(X_t(S)) = u_t^\partial(S)[\beta+\gamma\dot{w}^\partial_t],\qquad(S,t)\in\partial\OO\times(0,T),
\end{equation} where $\alpha:\partial\OO\to\R$ is a bounded measurable function and $\dot{w}^\partial_t$ represents a one dimensional white noise affecting the boundary condition.

For this case, we borrow from \cite{marinoschi,CiotirRobin24} and endow $H^{1}(\OO)$ with the norm \begin{equation*}
	\int_{\OO}|\nabla\varphi(\xi)|^2d\xi+\int_{\partial\OO}\alpha(S)|\varphi(S)|^2dS,
\end{equation*} which we assume is equivalent to the Sobolev norm (a sufficient conditions for this is that $\alpha$ is nonnegative and bounded away from zero on some open subset of $\partial\OO$, though $\alpha$ could in principle assume negative values). We also assume that $\alpha$ is regular enough that the product $\alpha\varphi$ belongs to $H^{1/2}(\partial\OO)$ for $\varphi\in H^{3/2}(\partial\OO)$ (say, $\alpha\in H^{s}(\partial\OO)$ for $s>\frac{d-2}{2}$, see \cite{MultSobolev21}), due to \cite{Grisvard85Elliptic} the resolvent $(-\Delta_R)^{-1}$ maps $L^2(\OO)$ into $H^2(\OO)$, where $\Delta_R$ is the Robin Laplacian.

We denote $\HH=\{\varphi\in H^{2}(\OO):\frac{\partial \varphi}{\partial\nu}-\alpha\varphi=0\}$ with the norm $\norme{\varphi}_{\HH}=\norme{-\Delta\varphi}_{L^2(\OO)}$. By the weak formulation of the Poisson Equation, $\HH$ is exactly the set of functions $\varphi$ such that the mapping $H^1(\OO)\ni x\mapsto \left( \varphi,x\right)_{H^1(\OO)}$ is $L^2(\OO)$-continuous, this means $\HH$ is the dual of $L^2(\OO)$ with pivot space $H^1(\OO)$. The dual of $L^2(\OO)$ with pivot space $(H^{1}(\OO))'$ is then $\HH'$.

We consider the Gelfand triple $$V_0:=L^2(\OO)\subset H:=(H^{1}(\OO))'\subset V_1=\HH'.$$

Let $U= L^2(\OO)\times L^2(\partial\OO)$. We define the operator $A:V_0\times U\to V_1$ by \begin{equation}\label{DefAExSPM:R}
	\langle A(x,u),\phi\rangle_{V_1,{V_1}'}:= \int_{\OO}\Psi(x(\xi))\Delta\phi(\xi) d\xi+\int_{\OO}u^{\OO}(\xi)\phi(\xi) d\xi+\beta\int_{\partial\OO}u^{\partial}(S)\phi(S)dS,
\end{equation} where $\varphi\in {V_1}'= \HH$.

It's clear that $A\in C^1(V_0\times U,V_1)$ with $$\langle \partial_x A(x,u)z,\phi\rangle_{V_1,{V_1}'}:= \int_{\OO}\Psi'(x(\xi))z(\xi)\Delta\phi(\xi) d\xi$$ and $$\langle \partial_u A(x,u)v,\phi\rangle_{V_1,{V_1}'}= \int_{\OO}v^{\OO}(\xi)\phi(\xi) d\xi+\beta\int_{\partial\OO}v^{\partial}(S)\phi(S)dS.$$

For $\phi\in {V_1}'$, it holds that  \begin{align*}
	\langle \partial_x A(x,u)z+\partial_u A(x,u)v,\phi\rangle_{V_1,{V_1}'}=&\int_{\OO}\Psi'(x(\xi))z(\xi)\Delta\phi(\xi) d\xi\\&\qquad+\int_{\OO}v^{\OO}(\xi)\phi(\xi) d\xi+\beta\int_{\partial\OO}v^{\partial}(S)\phi(S)dS\\\le& \norme{\Psi'(x(\cdot))z(\cdot)}_{V_0}\norme{-\Delta\phi}_{{V_0}'}+C\norme{v}_U\norme{\phi}_{{V_1}'}\\\le& M^*\norme{z}_{V_0}\norme{\phi}_{{V_1}'}+C\norme{v}_U\norme{\phi}_{{V_1}'},
\end{align*} for some $C$, so that \eqref{AbsBoundDrift} holds.

Hypothesis \ref{hyp:6:ExSPM} implies that \begin{equation*}
		\langle \partial_x A(x,u)z,z\rangle_{V_1,V_0}=\langle \partial_x A(x,u)z,(-\Delta_R)^{-1}z\rangle_{V_{1},{V_1}'}=-\int_{\OO}\Psi'(x(\xi))z^2(\xi) d\xi\le -M_*\norme{z}_{V_0}^2
\end{equation*} where $\Delta_R$ is the Robin Laplacian and $\varphi=(-\Delta_R)^{-1}z$, so that \eqref{AbsCoer} is verified.

We consider $K=L^2(\OO)\times \R$ and $W=\left(\sum_{k=1}^{\infty} w^{k}_{t} e_{k},w^{\partial}_t\right)$. Define for $h=(h^\OO,h^\partial)\in K$ and $u=(u^\OO,u^\partial)\in U$ the value of $[\sigma(x,u)]h\in H=(H^1(\OO))'$ by evaluating at $\varphi\in H^1(\OO)$: \begin{equation}\label{DefSigmaEx:R}
	\left\langle[\sigma(x,u)]h,\varphi\right\rangle_{H,H'}=\sum_{k=1}^{\infty} \mu_{k} \left\langle h^\OO, e_{k} \right\rangle_{L^2(\OO)} \int_{\OO}x(\xi) e_{k}(\xi)\varphi(\xi)d\xi+\gamma h^\partial\int_{\partial\OO}u^\partial(S)\varphi(S)dS.
\end{equation}

We verify that $\sigma\in\LL(H\times U,\LL_0)$, so Hypothesis \ref{hyp:dyn:2} is verified trivially.

The costate equation \eqref{BSEESPM:main} is then set in the Gelfand triple $${V_1}'=\HH\subset H'=H^{1}(\OO)\subset {V_0}'=L^2(\OO)$$ and reads as  \begin{equation}\label{CoStateEqSPM:R}\begin{array}{rll}
		dp_t(\xi)&=-\left[\Psi'(x_t(\xi))\Delta p_t(\xi)+\sum_{k=1}^{\infty} \mu_k q^k_t(\xi)e_k(\xi)-\partial_x f(t,\bar{X}_t,\bar{u}^{\OO}_t,\xi)\right]dt&\\
		&\qquad+\sum_{k=1}^{\infty} q^k_t(\xi)dw^{k}_{t}+q^\partial_t(\xi)dw^{\partial}_{t},&(\xi,t)\in\OO\times(0,T)\\
		\frac{\partial p_t}{\partial\nu}(S)&=\alpha(S)p_t(S),&(S,t)\in\partial\OO\times(0,T)\\
		p_T(\xi)&=-h(\xi),&\xi\in\OO
	\end{array}
\end{equation} and applying Lemma \ref{LemmaPointwise} on the product spaces $\Omega\times[0,T]\times\OO$ and $\Omega\times[0,T]\times\partial\OO$ we obtain the optimality conditions:
\begin{enumerate}
	\item[-] For a.e. $(\omega,t,\xi)\in \Omega\times (0,T)\times \OO$, it holds that
	\begin{equation}
		\label{optCondDualPointwiseInterior:R}
		v^\OO[p_t(\xi)-\partial_uf(\bar X_t(\xi),\bar u^{\OO}_t(\xi),t)]\le0\quad\forall v^{\OO}\in T^b_{U^{\OO}(\xi)}(\bar{u}^{\OO}_t).
	\end{equation} 
	\item[-] For a.e. $(\omega,t,S)\in\Omega\times(0,T)\times\partial\OO$ it holds that
	\begin{equation}
		\label{optCondDualPointwiseBoundary:R}
		v^\partial\left[ \beta p_t(S)+\gamma q^\partial_t(S)-\partial_ug(\bar u^{\partial}_t(S),t)\right]\le0\quad\forall v^{\partial}\in T^b_{U^{\partial}}(\bar{u}^{\partial}_t(S)).
	\end{equation}
\end{enumerate}

\begin{remark}
    We remark that with our assumptions on $\Psi$, the state equation \eqref{StateEq:ExSPM} is well posed in the Gelfand triple $H_0^1(\OO)\subset L^2(\OO)\subset H^{-1}(\OO)$ (or $H^1(\OO)\subset L^2(\OO)\subset (H^1(\OO))'$). The operator $A$ (with null Dirichlet boundary conditions) is then \begin{equation*}\begin{split}
        \langle A(x,u),\phi\rangle_{H^{-1}(\OO),H_0^1(\OO)}:=&-\int_{\OO}\nabla\Psi(x(\xi))\cdot\nabla\phi(\xi)d\xi+\int_{\OO}u^\OO(\xi)\phi(\xi)d\xi\\
        =&-\int_{\OO}\Psi'(x(\xi))\nabla x(\xi)\cdot\nabla\phi(\xi)d\xi+\int_{\OO}u^\OO(\xi)\phi(\xi)d\xi.
    \end{split}
    \end{equation*}

    Which means, at least formally, that \begin{equation*}
        \langle \partial_xA(x,u)z,\phi\rangle_{H^{-1}(\OO),H_0^1(\OO)}:=-\int_{\OO}\Psi''(x(\xi))z(\xi)\nabla x(\xi)\cdot\nabla\phi(\xi)d\xi-\int_{\OO}\Psi'(x(\xi))\nabla z(\xi)\cdot\nabla\phi(\xi)d\xi.
    \end{equation*}

    In order to handle the linearized state equation and/or the adjoint equation, the previous operator must be well defined, which (assuming $\Psi''$ exists and is bounded) is only de case in space dimension $d=1$, even in this case the coercivity hypothesis \eqref{AbsCoer} may not hold.
\end{remark}

\bibliographystyle{plain}
\bibliography{biblioProblem}

\end{document}